\documentclass[a4paper,fleqn]{cas-sc}

\usepackage[numbers]{natbib}

\usepackage{amssymb,amsmath,amsfonts}
\usepackage{enumerate}
\usepackage{pdfpages}
\usepackage{mathrsfs}
\usepackage{graphicx}
\usepackage{tabularx}
\usepackage{xcolor}
\usepackage{comment}

\newcommand{\MyParagraph}[1]{\noindent\textbf{#1.}\;}
\newcommand{\MyCite}[1]{\cite{#1}}
\newtheorem{Theorem}{Theorem}

\newtheorem{Proposition}{Proposition}
\newtheorem{Corollary}{Corollary}
\newtheorem{Remark}{Remark}
\newcommand{\ee}{{\rm e}}
\newcommand{\dd}{{\rm d}}

\newcommand{\pa}{\partial}
\newcommand{\pat}{\pa_{t}}

\newcommand{\ve}[2]{\begin{pmatrix} #1 \\ #2 \end{pmatrix}}

\newcommand{\vect}[2]{\left({{#1}\atop{#2}}\right)}
\renewcommand{\ve}[3]{\begin{pmatrix} #1 \\ #2 \\ #3 \end{pmatrix}}
\newcommand{\velo}{\vec{\mathrm{v}}}
\newcommand{\magind}{\vec{\mathrm{b}}}
\newcommand{\nO}{\mathcal{O}}

\begin{document}
\let\WriteBookmarks\relax
\def\floatpagepagefraction{1}
\def\textpagefraction{.001}
\shorttitle{Operator splitting methods for the Bloch-Torrey equation}
\shortauthors{Kaltenbacher, Mu\~noz, Thalhammer}

\title [mode = title]{Operator splitting methods for non-autonomous evolution equations comprising three parts, with application to the Bloch-Torrey equation of MRI}                      



\author[1]{Barbara Kaltenbacher}[
                        orcid=0000-0002-3295-6977]
\cormark[1]
\fnmark[1]
\ead{barbara.kaltenbacher@aau.at}

\credit{Theorems, Proofs, Model, Writing}

\author[1]{Pablo Mu\~noz}[
]
\fnmark[1]
\ead{pablo.munoz@aau.at}

\credit{Theorems, Proofs, Implementation, Numerical Tests, Writing}

\affiliation[1]{organization={Department of Mathematics, University of Klagenfurt},
                addressline={Universit\"atsstr 65-67}, 
                city={Klagenfurt},
                postcode={9020}, 
                country={Austria}}
\author[2]{Mechthild Thalhammer}[
                        orcid=0000-0001-5958-6219]
\fnmark[2]
\ead{mechthild.thalhammer@uibk.ac.at}

\credit{Theorems, Proofs, Methodology, Writing}

\affiliation[2]{organization={Leopold--Franzens-Universit{\"a}t Innsbruck, Institut f{\"u}r Mathematik},
                addressline={Technikerstr. 13 / VII}, 
                postcode={6020}, 
                city={Innsbruck},
                country={Austria}}

\cortext[cor1]{Corresponding author}
\fntext[fn1]{The work by PM and BK has been funded by the Austrian Science Fund (FWF) 
[10.55776/F100800]}
\fntext[fn2]{
The work by MT has been funded by the Austrian Science Fund (FWF)
[10.55776/PAT1281625]
}


\begin{abstract}
    In this paper we devise and analyze Lie Trotter splitting schemes for non-autonomous evolution equations, following two approaches: 1) A fully non-autonomous scheme, motivated by the potential use of explicit solutions of some subproblems. 2) A splitting scheme relying on the concept of Magnus integrators, where we freeze the operators at the beginning of each time interval as this suffices for first order convergence of the Lie-Trotter integrator. 
We derive first order convergence in a general setting and apply the analysis to the Bloch-Torrey equation of magnetic resonance imaging, where a decomposition into three subproblems is well-motivated by the physical role of the underlying operators.
Numerical tests, complementing the time stepping scheme by a finite element space discretization, illustrate the theoretical findings.
\end{abstract}



\begin{keywords}
Bloch-Torrey equation \sep Operator splitting methods 
\end{keywords}

\maketitle

\section{Introduction}

A vast amount of physical, biological or engineering processes are modeled by time dependent partial differential equations PDEs, where the various effects are described by a sum of terms driving the evolution and the presence of some time dependent control term renders the system non-autonomous.
The work in this paper is particularly motivated by 
model based quantitative magnetic resonance imaging \cite{MRI-uniqueness,ScholandWangRoeloffsRosenzweigUecker2023,Uecker_Review:2021,UeckerHohageBlockFrahm2008} which amounts to reconstructing space dependent coefficients in a space-time PDE -- the Bloch-Torrey equation.
Its efficient numerical solution is therefore essential for solving the corresponding inverse problem.
On the other hand, steering the evolution described by the Bloch-Torrey equation by means of time dependent control functions is crucial for enhancing image quality.
We are therefore faced with a non-autonomous evolutionary system with a special structure that lends itself into a decomposition into three parts corresponding to three different physical effects.

Hence, this paper aims to provide a framework that allows to analyze operator splitting schemes in evolution equations with decomposition into more than the conventionally considered two parts, and to make use of it for devising and analyzing three-splitting schemes for the Bloch-Torrey equation.


\MyParagraph{Problem setting}
The fundamental PDE model describing the physics underlying magnetic resonance imaging is the 
Bloch-Torrey equation for the magnetization $\vec M$,
see, e.g., \cite{bloch46,torrey56,Nishimura2010}.
A convective version is 
\begin{equation}\label{eqn:bloch-torrey}
\begin{aligned}
    \frac{d}{dt} \vec M(t, \vec r) = \gamma \vec M(t, \vec r) \times  
\magind(t, \vec r)
    -  \begin{pmatrix} R_2(\vec r)\,M_{\mathsf{x}}(t, \vec r) \\ R_2(\vec r)\,M_{\mathsf{y}}(t, \vec r) \\ R_1(\vec r)\,(M_{\mathsf{z}}(t, \vec r)-M^{eq}(\vec r)) \end{pmatrix}
    +\nabla\cdot\Bigl(D(\vec r) \, \nabla \vec M(t, \vec r)\Bigr) 
    -(\velo(t,\vec r)\cdot\nabla)\vec M(t,\vec r).
\end{aligned}
\end{equation}

Quantities appearing in \eqref{eqn:bloch-torrey} are the magnetization $\vec M$, the magnetic induction $\magind$, the equilibrium magnetization $M^{eq}$, the spin-lattice and spin-spin relaxation rates $R_1$, $R_2$, the velocity $\velo$ and the diffusion $D$.

The transversal part of the magnetic flux density typically takes the form 
\[
\magind_\perp (t, \vec r) :=
\vect{\magind_{\mathsf{x}}}{\magind_{\mathsf{y}}}
(t, \vec r) =\underline{c}^+ (\vec r) \, p(t)
\]
with 
the radio frequency RF field $p$ and $\underline{c}^+$ describing spatial dependency of the transmitted field as determined by the scanner hardware.
\\
Its longitudinal part is of the form
$\magind_{\mathsf{z}}(t, \vec r) =B^0 + \delta B^0(\vec r) +   \vec r \cdot \vec G(t)$ with the static magnetic field $B^0$, the local field inhomogeneity $\delta B^0$ and the field gradient $\vec G$.

In \eqref{eqn:bloch-torrey}, the first term on the right hand side describes \textit{rotation} \footnote{more precisely, a combination of precession due by the main magnetic field $B^0$ and rotation away from the longitudinal $z$ direction due to the radio frequency pulse} driven by an imposed magnetic field with flux density $\magind(t, \vec r)$, and the second one \textit{relaxation} towards the equilibrium magnetization, that is conventionally assumed to be aligned in $z$-direction. 
\\
\textit{Diffusion} -- which distinguishes the Bloch-Torrey PDE from the Bloch ODE model -- is possibly anisotropic but typically acts the same way on all components of the magnetization 
\[
\begin{aligned}
\nabla\cdot\Bigl(D \, \nabla \vec M)&:=\Bigl(\nabla\cdot\Bigl(D \, \nabla M_j)\Bigr)_{j\in\{1,2,3\}}
\end{aligned}
\]
with a symmetric positive definite diffusion tensor $D:\mathbb{R}^3\mapsto \mathbb{R}^{3\times 3}$.
It is therefore to be understood as being applied component wise whenever acting on a vector valued function.
\\
The \textit{motion} term $(\velo(t,\vec r)\cdot\nabla)\vec M(t,\vec r)$ allows to describe mechanical displacements (e.g., due to patient motion or blood flow) inside the imaged object.

Partition of the magnetization vector $\vec M$ into a longitudinal $M_z$ and a transversal $M_{\perp}=\vect{M_x}{M_y}$ component and considering it in a frame rotating at the Larmor frequency $\gamma B^0$ to remove the (known) high frequency component of the transversal magnetization, together with the abbreviation
\[
\vect{a_x}{a_y}^\vdash = \vect{a_y}{-a_x}
\]
leads to the coupled transversal-longitudinal system (skipping the space variable $\vec r$ for brevity)
\begin{equation}\label{eqn:bloch-torrey_1}
\begin{aligned}
\frac{d}{dt} M_\perp(t) &= \gamma \magind_z(t)\, M_\perp(t)^\vdash -\gamma M_z(t)\, \magind_\perp(t)^\vdash- R_2\, M_\perp(t) 
+[\nabla\cdot(D \nabla \cdot) -(\velo(t)\cdot\nabla)]\,M_\perp(t)\\
\frac{d}{dt} M_z(t) &= \gamma \magind_\perp(t)^\vdash\cdot M_\perp(t) - R_1\, (M_z(t)-M^{eq})
+[\nabla\cdot(D \nabla \cdot) -(\velo(t)\cdot\nabla)]\,M_z(t)\\
\end{aligned}
\end{equation}
In order to end up with a homogeneous system, we introduce $M^{eq}$ as an additional auxiliary state $M_e(t)$ that satisfies $\frac{d}{dt} M_e(t)=0$ with initial condition $M_e(0)=M^{eq}$.

We consider two decompositions into three parts (both could be externded to decompositions into four parts by separating diffusion and motion)

\MyParagraph{Decomposition~I}
\begin{equation*}
\begin{aligned}
&A(t)\ve{v_\perp}{v_z}{v_e} = \gamma\ve{\magind_z(t)\, v_\perp^\vdash - v_z\, \magind_\perp(t)^\vdash}{\magind_\perp(t)^\vdash\cdot v_\perp}{0}
&&\text{rotation} \\
&B\ve{v_\perp}{v_z}{v_e} = -\ve{R_2\, v_\perp}{R_1(v_z-v_e)}{0}
&&\text{relaxation} \\
&C(t)\ve{v_\perp}{v_z}{v_e} = \ve{[\nabla\cdot(D \nabla \cdot) -(\velo(t)\cdot\nabla)]}{[\nabla\cdot(D \nabla \cdot) -(\velo(t)\cdot\nabla)]}{0}
&&\text{diffusion \& motion}.
\end{aligned}
\end{equation*}
Note that $B$ is autonomous; so is $C$ in case of stationary (or vanishing) motion $\velo$.

\MyParagraph{Subproblems of Decomposition~I}\\ 
\textit{rotation:}\ 
$\frac{d}{dt} u(t)=A(t)u(t)$ is a pointwise in space non-autonomous ODE that can be solved by a standard numerical ODE scheme.

\noindent
\textit{relaxation:}\
$\frac{d}{dt} u(t)=Bu(t)$ has the simple explicit solution
\[
\begin{aligned}
&u_\perp(t)=\exp(-R_2(\vec r)(t-t_0))u_\perp(t_0)\\
&u_z(t)-u_e(t_0)=\exp(-R_1(\vec r)(t-t_0))(u_z(t_0)-u_e(t_0))\\
&u_e(t)=u_e(t_0)
\end{aligned}
\]

\noindent
\textit{diffusion \& motion:}\
$\frac{d}{dt} u(t)=C(t)u(t)$ is a standard advection-diffusion problem, for which well-known numerical schemes exist.

\MyParagraph{Decomposition~II}
\begin{equation*}
\begin{aligned}
&A(t)\ve{v_\perp}{v_z}{v_e} = \gamma\ve{- v_z\, \magind_\perp(t)^\vdash}{\magind_\perp(t)^\vdash\cdot v_\perp}{0}
&&\text{rotation - coupling}  \\
&B(t)\ve{v_\perp}{v_z}{v_e} = \ve{\gamma\,\magind_z(t)\, v_\perp^\vdash -R_2\, v_\perp}{-R_1(v_z-v_e)}{0}
&&\text{rest-of-rotation \& relaxation}  \\
&C(t)\ve{v_\perp}{v_z}{v_e} = \ve{[\nabla\cdot(D \nabla \cdot) -(\velo(t)\cdot\nabla)]}{[\nabla\cdot(D \nabla \cdot) -(\velo(t)\cdot\nabla)]}{0}
&&\text{diffusion \& motion}.
\end{aligned}
\end{equation*}

\MyParagraph{Subproblems of Decomposition~II}\\ 
\textit{rotation - coupling:}\
In the quite common case $\magind_\perp(t)=c^+(\vec r) p(t)$, with $c^+(\vec r)\in\mathbb{R}^2$, $p(t)\in\mathbb{R}$, an explicit solution 
of $\frac{d}{dt} u(t)=A(t)u(t)$ is given by
\begin{equation}\label{explicitsolution_BT_DII_A}
\begin{aligned}
&u(t,\vec r)=\bigl(\exp(\int_{t_0}^tA(s)\, ds)u_0\bigr)(\vec r)\\
&=\,U(\vec r)\,\text{diag}\Bigl(1,\,\exp(\imath\gamma|c^+(\vec r)|\int_{t_0}^tp(s)\, ds),\,\exp(-\imath\gamma|c^+(\vec r)|\int_{t_0}^tp(s)\, ds)\Bigr)\,U(\vec r)^*u_0,
\end{aligned}
\end{equation}
with 
\[
U(\vec r)= \frac{1}{\sqrt{2}|c|}\left(\begin{matrix}
c_x^+(\vec r)&\imath c_y^+(\vec r)&\imath c_y^+(\vec r)\\
c_y^+(\vec r)&-\imath c_x^+(\vec r)&-\imath c_x^+(\vec r)\\
0&|c^+(\vec r)|&|c^+(\vec r)|
\end{matrix}\right), \quad 
|c|^2=c_x^2+c_y^2
\]
as can be readily checked.

\noindent
\textit{rest-of-rotation \& relaxation:}\
With $\magind_{\mathsf{z}}(t, \vec r) = \delta B^0(\vec r) +   \vec r \cdot \vec G(t)$ in the rotating frame, and the notation
\begin{equation}\label{kt}
{\vec k}(t)=\frac{\gamma}{2\pi}\int_0^t \vec G(\tau)\, d\tau 
\qquad\text{\ldots k-space trajectory},
\end{equation}
we have that 
\[
\begin{aligned}
&u_\perp(t)=\exp(-R_2(\vec r)(t-t_0))
\vect{
\cos(\omega(\vec r,t))u_x(t_0)-\sin(\omega(\vec r,t))u_y(t_0)}{
\sin(\omega(\vec r,t))u_x(t_0)+\cos(\omega(\vec r,t))u_y(t_0)}\\
&u_z(t)=\exp(-R_1(\vec r)(t-t_0))u_z(t_0)\\
&\text{with }\omega(\vec r,t)=\gamma \delta B_0(\vec r)(t-t_0)+{\vec r}\cdot{\vec k}(t)
\end{aligned}
\]
solves $\frac{d}{dt} u(t)=B(t)u(t)$. 

\noindent
\textit{diffusion \& motion:}\
remains as in decomposition I.

\medskip

We will study \eqref{eqn:bloch-torrey_1} and its discretzation in more detail in Sections~\ref{sec:BT}, \ref{sec:Numerics}.

\medskip 

More generally, the splitting approaches devised and analyzed in this paper apply to evolutions that are subject to a time dependent control via a bilinear control term. This motivates us to also study a more abstract setting involving a non-autonomous bilinear control term alongside an advection-diffusion component, cf. Section~\ref{sec:MultDiffAdv}.

\medskip

The remainder of the paper is organized as follows. 
Section~\ref{sec:GeneralFramework} provides a general framework for the analysis of the local and global time discretization error in a Lie-Trotter splitting method involving three non-autonomous components.
In Section~\ref{sec:MultDiffAdv}, we derive the evolution estimates and commutator estimates required for this error analysis for a general system comprising two multiplication operators and an advection-diffusion operator.
Section~\ref{sec:BT} focuses on the Bloch-Torrey equation and Section~\ref{sec:Numerics} provides some further details on its space discretzation with a priori error estimates, as well as numerical experiments illustating the theoretical findings.



\section{General framework}
\label{sec:GeneralFramework}

\MyParagraph{Main objective}
In the following, we provide a guide line for the rigorous convergence analysis of the first-order Lie--Trotter splitting method adapted to the time integration of linear partial differential equations comprising three (or more) defining operators of different types and explicit time dependencies. 
For this purpose, we consider a general analytical framework of non-autonomous linear evolutionary problems with three parts. 
Our considerations will then be made concrete for the Bloch-Torrey equation and confirmed by numerical examples in the subsequent sections.
Essential theoretical means for the derivation of local and global error bounds for operator splitting methods and related Magnus-type integrators applied to linear and nonlinear evolution equations are given in our previous works, see for instance~\MyCite{AuzingerKochThalhammer2015,BlanesCasasGonzalezThalhammer2025,BlanesCasasThalhammer2018,GonzalezOstermannThalhammer2006,KaltenbacherNikolicThalhammer2015} and references given therein; 
see also the general references \cite{McLachlanQuispel2002,Blanes_Casas_Murua_2024} on operator splitting methods as well as 
\cite{BatkaiCsomosFarkasNickel2011} and the references therein for the non-autonomous setting. 

\MyParagraph{Non-autonomous evolutionary problems}
Throughout, we study the initial value problem for a linear evolution equation of the form 
\begin{equation}
\label{eq:IVP}
\begin{cases}
u'(t) = F(t) \, u(t) = A(t) \, u(t) + B(t) \, u(t) + C(t) \, u(t)\,, \quad t \in [t_0, T]\,, \\
u(t_0) \, \text{ given}\,, 
\end{cases}
\end{equation}
with solution $u: [t_0, T] \to X$ taking values in the underlying Banach space.
Regarding the theoretical analysis of time integration methods, it is convenient to introduce the associated evolution operator
\begin{equation}
\label{eq:EF}
u(t) = E_F(t - t_0; t_0) \, u(t_0)\,, \quad t \in [t_0, T]\,, \quad E_F(0; t_0) = I\,, 
\end{equation}
indicating the dependencies on the defining operator, the time increment, and the initial time. 
Hereby, $I: X \to X$ denotes the identity operator. 

\MyParagraph{Time stepping approach}
For the numerical approximation of~\eqref{eq:IVP}, we employ a standard time stepping approach. 
That is, for certain time grid points with corresponding increments 
\begin{equation*}
t_0 < t_1 < \dots < t_N = T\,, \quad \tau_n = t_{n+1} - t_n\,, \quad n \in \{0, 1, \dots, N-1\}\,, 
\end{equation*}
and an initial approximation~$u_0$, we determine numerical solution values through a recurrence of the form 
\begin{equation*}
u_{n+1} = S_F(\tau_n; t_n) \, u_n \; \approx \; u(t_{n+1}) = E_F(\tau_n; t_n) \, u(t_n)\,, \quad n \in \{0, 1, \dots, N-1\}\,.
\end{equation*}

\MyParagraph{Approach 1 (Reformulation, Lie--Trotter splitting)}
A natural approach to define the numerical evolution operator relies on the reformulation of~\eqref{eq:IVP} as autonomous problem by introducing an auxiliary scalar function 
\begin{equation*}
\begin{cases}
u'(t) = F\big(s(t)\big) \, u(t)\,, \quad 
s(t) = t\,, \quad s'(t) = 1\,, \quad t \in [t_0, T]\,, \\
s(t_0) = t_0\,, \quad u(t_0) \, \text{ given}\,, \\
\end{cases}
\end{equation*}
In accordance with the first-order Lie--Trotter splitting method, we consider the associated subproblems on a temporal subinterval such that 
\begin{equation*}
s(t) = s(t_n) + \int_{t_n}^{t} s'(\sigma) \; \dd \sigma\,, \quad t \in [t_n, t_n + \tau_n]\,,
\end{equation*}
and compose their solutions 
\begin{equation*}
\begin{aligned}
&\begin{cases}
v_1'(t) = A\big(s_1(t)\big) \, v_1(t)\,, 
\quad s_1'(t) = \tfrac{1}{3}\,, \quad t \in [t_n, t_n + \tau_n]\,, \\
s_1(t_n) = t_n\,, \quad v_1(t_n) = u_n\,, \\
\end{cases} \\
&\begin{cases}
v_2'(t) = B\big(s_2(t)\big) \, v_2(t)\,, 
\quad s_2'(t) = \tfrac{1}{3}\,, \quad t \in [t_n, t_n + \tau_n]\,, \\
s_2(t_n) = s_1(t_n + \tau_n) = t_n + \tfrac{1}{3} \, \tau_n\,, \quad v_2(t_n) = v_1(t_n + \tau_n)\,, \\
\end{cases} \\
&\begin{cases}
v_3'(t) = C\big(s_3(t)\big) \, v_3(t)\,, 
\quad s_3'(t) = \tfrac{1}{3}\,, \quad t \in [t_n, t_n + \tau_n]\,, \\
s_3(t_n) = s_2(t_n + \tau_n) = t_n + \tfrac{2}{3} \, \tau_n\,, \quad v_3(t_n) = v_2(t_n + \tau_n)\,.
\end{cases} 
\end{aligned}
\end{equation*}
In this way, we obtain the new numerical solution value through 
\begin{equation*}
u_{n+1} =  v_3(t_n + \tau_n)\,, \quad n \in \{0, 1, \dots, N-1\}\,.
\end{equation*}
It is notable that the resolution of the time components 
\begin{equation*}
\begin{gathered}
s_1(t) = s_1(t_n) + \tfrac{1}{3} \, (t - t_n) = t_n + \tfrac{1}{3} \, (t - t_n)\,, \\
s_2(t) = s_2(t_n) + \tfrac{1}{3} \, (t - t_n) = t_n + \tfrac{1}{3} \, \tau_n + \tfrac{1}{3} \, (t - t_n)\,, \\
s_3(t) = s_3(t_n) + \tfrac{1}{3} \, (t - t_n) = t_n + \tfrac{2}{3} \, \tau_n + \tfrac{1}{3} \, (t - t_n)\,, \\
t \in [t_n, t_n + \tau_n]\,,
\end{gathered}
\end{equation*}
as well as resubstitution yields a non-autonomous formulation of the subproblems 
\begin{equation*}
\begin{aligned}
&\begin{cases}
v_1'(t) = A\big(t_n + \tfrac{1}{3} \, (t - t_n)\big) \, v_1(t)\,, \quad v_1(t_n) = u_n\,, \\
v_1(t) = E_A\big(t - t_n; t_n; \tfrac{1}{3}\big) \, v_1(t_n)\,, \quad t \in [t_n, t_n + \tau_n]\,, \\ 
\end{cases} \\
&\begin{cases}
v_2'(t) = B\big(t_n + \tfrac{1}{3} \, \tau_n + \tfrac{1}{3} \, (t - t_n)\big) \, v_2(t)\,, \quad v_2(t_n) = v_1(t_n + \tau_n)\,, \\
v_2(t) = E_B\big(t - t_n; t_n + \tfrac{1}{3} \, \tau_n; \tfrac{1}{3}\big)\,, \quad t \in [t_n, t_n + \tau_n]\,, \\ 
\end{cases} \\
&\begin{cases}
v_3'(t) = C\big(t_n + \tfrac{2}{3} \, \tau_n + \tfrac{1}{3} \, (t - t_n)\big) \, v_3(t)\,, \quad 
v_3(t_n) = v_2(t_n + \tau_n)\,, \\
v_3(t) = E_C\big(t - t_n; t_n + \tfrac{2}{3} \, \tau_n; \tfrac{1}{3}\big)\,, \quad t \in [t_n, t_n + \tau_n]\,. 
\end{cases} \\
\end{aligned}
\end{equation*}
Similarly to~\eqref{eq:EF}, the dynamics of the subproblems is captured by the corresponding evolution operators. 
We indicate the operator that defines the right-hand side, the time increment, the initial time, and additionally the scaling factor occurring in the time dependency. 
Altogether, in the context of the first-order Lie--Trotter splitting method, we obtain the relation 
\begin{equation}
\label{eq:Approach1}
\begin{gathered}
u_{n+1} = S_F(\tau_n; t_n) \, u_n = E_C\big(\tau_n; t_n + \tfrac{2}{3} \, \tau_n; \tfrac{1}{3}\big) \, E_B\big(\tau_n; t_n + \tfrac{1}{3} \, \tau_n; \tfrac{1}{3}\big) \, E_A\big(\tau_n; t_n; \tfrac{1}{3}\big) \, u_n\,, 
\quad
n \in \{0, 1, \dots, N-1\}\,.
\end{gathered}
\end{equation}
We point out that this procedure extends to higher-order operator splitting methods.

\MyParagraph{Approach 2 (Magnus integrator, Lie--Trotter splitting)}
An alternative approach for the treatment of non-autonomous evolutionary problem is based on the concept of Magnus integrators. 
In order to establish a first-order approximation, it suffices to replace the defining operator on each temporal subinterval by its value at the initial time
\begin{equation*}
F(t) \approx F(t_n)\,, \quad t \in [t_n, t_n + \tau_n]\,, \quad n \in \{0, 1, \dots, N-1\}\,. 
\end{equation*}
In this way, we arrive at an autonomous problem  
\begin{equation*}
\begin{cases}
v'(t) = F(t_n) \, v(t) = A(t_n) \, u(t) + B(t_n) \, u(t) + C(t_n) \, u(t)\,, \quad t \in [t_n, t_n + \tau_n]\,, \\
v(t_n) \, \text{ given}\,, 
\end{cases}
\end{equation*}
where the application of the first-order Lie--Trotter splitting method yields the new numerical solution value. 
That is, we compose the solutions to the corresponding time-frozen subproblems 
\begin{equation*}
\begin{gathered}
\begin{cases}
v_1'(t) = A(t_n) \, v_1(t)\,, \quad t \in [t_n, t_n + \tau_n]\,, \\
v_1(t_n) = u_n\,, 
\end{cases} \\
\begin{cases}
v_2'(t) = B(t_n) \, v_2(t)\,, \quad t \in [t_n, t_n + \tau_n]\,, \\
v_2(t_n) = v_1(t_n + \tau_n)\,, 
\end{cases} \\
\begin{cases}
v_3'(t) = C(t_n) \, v_3(t)\,, \quad t \in [t_n, t_n + \tau_n]\,, \\
v_3(t_n) = v_2(t_n + \tau_n)\,, 
\end{cases} \\
\end{gathered}
\end{equation*}
to obtain the defining relation 
\begin{equation*}
u_{n+1} =  v_3(t_n + \tau_n)\,, \quad n \in \{0, 1, \dots, N-1\}\,.
\end{equation*}
Adapting the notation for the evolution operator associated with the time-frozen problem 
\begin{equation*}
v(t) = E_{F(t_n)}(t - t_n) \, v(t_n)\,, \quad t \in [t_n, t_n + \tau_n]\,, 
\end{equation*}
and analogously for the subproblems, we have  
\begin{equation}
\label{eq:Approach2}
\begin{gathered}
u_{n+1} = S_F(\tau_n; t_n) \, u_n = E_{C(t_n)}(\tau_n) \, E_{B(t_n)}(\tau_n) \, E_{A(t_n)}(\tau_n) \, u_n\,, 
\quad
n \in \{0, 1, \dots, N-1\}\,.
\end{gathered}
\end{equation}
Generalisations involve evaluations of the defining right-hand side at appropriately chosen intermediate times and the application of higher-order operator splitting methods. 

\MyParagraph{Remark}
We point out that both approaches are practicable for an application to the Bloch--Torrey equation. 
The first approach makes use of the knowledge of exact solutions to non-autonomous subproblems. 
The second approach is somewhat simpler, since it replaces time-dependent functions by their piecewise constant interpolants. 

\MyParagraph{Global error representation}
As standard, we employ the telescopic identity 
\begin{equation}
\label{eq:GlobalError}
\begin{split}  
&u_N - u(t_N) \\
&= \prod_{n=0}^{N-1} S_F(\tau_n; t_n) \, u_0 - \prod_{n=0}^{N-1} E_F(\tau_n; t_n) \, u(t_0) \\
&= \prod_{n=0}^{N-1} S_F(\tau_n; t_n) \, \big(u_0 - u(t_0)\big) + \Bigg(\prod_{n=0}^{N-1} S_F(\tau_n; t_n) - \prod_{n=0}^{N-1} E_F(\tau_n; t_n)\Bigg) \, u(t_0) \\  
&= \prod_{n=0}^{N-1} S_F(\tau_n; t_n) \, \big(u_0 - u(t_0)\big) 
+ \sum_{n=0}^{N-1} \Bigg(\prod_{m=n+1}^{N-1} S_F(\tau_m; t_m)\Bigg) \big(S_F(\tau_n; t_n) - E_F(\tau_n; t_n)\big) \Bigg(\prod_{m=0}^{n-1} E_F(\tau_m; t_m) \Bigg) \, u(t_0)\,.   
\end{split}  
\end{equation}
also referred to as Lady Windermere's fan, in order to reduce a global error analysis to stability bounds for the exact and numerical evolution operators as well as local error estimates. 
The product of non-commutative operators is defined downwards, and the empty product is given by the identity operator. 

\MyParagraph{Stability}
Stability bounds for multiple compositions of the numerical evolution operators associated with the two approaches 
\begin{equation*}
\begin{aligned}    
S_F(\tau_n; t_n) \, u_n &= E_C\big(\tau_n; t_n + \tfrac{2}{3} \, \tau_n; \tfrac{1}{3}\big) \, E_B\big(\tau_n; t_n + \tfrac{1}{3} \, \tau_n; \tfrac{1}{3}\big) \, E_A\big(\tau_n; t_n; \tfrac{1}{3}\big)\,, \\ 
S_F(\tau_n; t_n) \, u_n &= E_{C(t_n)}(\tau_n) \, E_{B(t_n)}(\tau_n) \, E_{A(t_n)}(\tau_n)\,, 
\end{aligned}
\end{equation*}
see~\eqref{eq:Approach1} and~\eqref{eq:Approach2}, respectively, are provided below. 
We consider the setting, where~$A$ and~$B$ represent space-time-dependent multiplication operators and~$C$ a diffusion-advection operator.
In particular, we specialise the arguments to the Bloch--Torrey equation. 

\MyParagraph{Local error analysis}
Important tools in our local error analysis are integral representations by the variation-of-constants formula and the derivation of suitable expansions of the arising integrands. 
Our main interest is to identify commutators, defined by 
\begin{equation*}
[G, H] = G \, H - H \, G
\end{equation*}
on certain subspaces of the underlying Banach space, since these capture the essential regularity assumptions on the prescribed initial state and hence the exact solution to~\eqref{eq:IVP}. 
Fixing an index $n \in \{0, 1, \dots, N-1\}$, we next detail this strategy for our first approach, 

\MyParagraph{Integral representation (Approach 1)}
On each temporal subinterval, we consider the local error operator as a time-dependent quantity
\begin{equation*}
L_F(t - t_n; t_n) = S_F(t - t_n; t_n) - E_F(t - t_n; t_n)\,, \quad t \in [t_n, t_n + \tau_n]\,.
\end{equation*}
With regard to the evolutionary problem 
\begin{equation*}
\begin{cases}
\tfrac{\dd}{\dd t} \, E_F(t - t_n; t_n) = F(t) \, E_F(t - t_n; t_n)\,, \quad t \in [t_n, t_n + \tau_n]\,, \\
E_F(0; t_n) = I\,, 
\end{cases}
\end{equation*}
we cast the initial value problem for the numerical evolution operator into the form 
\begin{equation*}
\begin{cases}
\tfrac{\dd}{\dd t} \, S_F(t - t_n; t_n) = F(t) \, S_F(t - t_n; t_n) + R(t - t_n; t_n)\,, \quad t \in [t_n, t_n + \tau_n]\,, \\
S_F(0; t_n) = I\, 
\end{cases}
\end{equation*}
with the so-called defect $R$.
We conclude at once that the difference satisfies 
\begin{equation*}
\begin{cases}
\tfrac{\dd}{\dd t} \, L_F(t - t_n; t_n) = F(t) \, L_F(t - t_n; t_n) + R(t - t_n; t_n)\,, \quad t \in [t_n, t_n + \tau_n]\,, \\
L_F(0; t_n) = 0\,.
\end{cases}
\end{equation*}
As a consequence, we obtain the integral representation 
\begin{equation*}
L_F(t - t_n; t_n) = \int_{0}^{t - t_n} E_F(t - t_n - \zeta; t_n) \, R(\zeta; t_n) \; \dd \zeta\,, \quad t \in [t_n, t_n + \tau_n]\,.
\end{equation*}
Relying on a framework that ensures boundedness of the exact evolution operator on the underlying Banach space, it remains to express the defect $R$ in a suitably manner. 

\MyParagraph{Expansion of defect (Approach 1)}
As pointed out before, a relevant aspect in the expansion of the defect is the identification of commutators of the values of the defining operators at certain intermediate times 
\begin{equation*}
\big[A(s), B(t)\big]\,, \quad \big[A(s), C(t)\big]\,, \quad 
\big[B(s), C(t)\big]\,, \quad s, t \in [t_n, t_n + \tau_n]\,. 
\end{equation*}
Thus, we may focus on a detailed derivation of the needed auxiliary expansion rather than stating the lengthy local error expansion in full detail. 
Recalling the definition of the numerical evolution operator \begin{equation*}
\begin{gathered}  
S_F(t - t_n; t_n) \, u_n = E_C\big(t - t_n; t_n + \tfrac{2}{3} \, \tau_n; \tfrac{1}{3}\big) \, E_B\big(t - t_n; t_n + \tfrac{1}{3} \, \tau_n; \tfrac{1}{3}\big) \, E_A\big(t - t_n; t_n; \tfrac{1}{3}\big)\,, 
\quad
t \in [t_n, t_n + \tau_n]\,,
\end{gathered}  
\end{equation*}
and the related evolution equations 
\begin{equation*}
\begin{aligned}  
&\tfrac{\dd}{\dd t} \, E_A\big(t - t_n; t_n; \tfrac{1}{3}\big) = A\big(t_n + \tfrac{1}{3} \, (t - t_n)\big) \, E_A\big(t - t_n; t_n; \tfrac{1}{3}\big)\,, &&
t \in [t_n, t_n + \tau_n]\,,\\
&\tfrac{\dd}{\dd t} \, E_B\big(t - t_n; t_n + \tfrac{1}{3} \, \tau_n; \tfrac{1}{3}\big)
= B\big(t_n + \tfrac{1}{3} \, \tau_n + \tfrac{1}{3} \, (t - t_n)\big) \, E_B\big(t - t_n; t_n + \tfrac{1}{3} \, \tau_n; \tfrac{1}{3}\big)\,, &&
t \in [t_n, t_n + \tau_n]\,,\\
&\tfrac{\dd}{\dd t} \, E_C\big(t - t_n; t_n + \tfrac{2}{3} \, \tau_n; \tfrac{1}{3}\big)
= C\big(t_n + \tfrac{2}{3} \, \tau_n + \tfrac{1}{3} \, (t - t_n)\big) \, E_C\big(t - t_n; t_n + \tfrac{2}{3} \, \tau_n; \tfrac{1}{3}\big)\,, &&
t \in [t_n, t_n + \tau_n]\,,
\end{aligned}  
\end{equation*}
it is straightforward to determine its time derivative by the chain rule 
\begin{equation*}
\begin{split}  
&\tfrac{\dd}{\dd t} \, S_F(t - t_n; t_n) \\
&= \tfrac{\dd}{\dd t} \, E_C\big(t - t_n; t_n + \tfrac{2}{3} \, \tau_n; \tfrac{1}{3}\big) \, E_B\big(t - t_n; t_n + \tfrac{1}{3} \, \tau_n; \tfrac{1}{3}\big) \, E_A\big(t - t_n; t_n; \tfrac{1}{3}\big) \\
&\quad + E_C\big(t - t_n; t_n + \tfrac{2}{3} \, \tau_n; \tfrac{1}{3}\big) \, \tfrac{\dd}{\dd t} \, E_B\big(t - t_n; t_n + \tfrac{1}{3} \, \tau_n; \tfrac{1}{3}\big) \, E_A\big(t - t_n; t_n; \tfrac{1}{3}\big) \\
&\quad + E_C\big(t - t_n; t_n + \tfrac{2}{3} \, \tau_n; \tfrac{1}{3}\big) \, E_B\big(t - t_n; t_n + \tfrac{1}{3} \, \tau_n; \tfrac{1}{3}\big) \, \tfrac{\dd}{\dd t} \, E_A\big(t - t_n; t_n; \tfrac{1}{3}\big) \\
&= C\big(t_n + \tfrac{2}{3} \, \tau_n + \tfrac{1}{3} \, (t - t_n)\big) \\ 
&\quad \times E_C\big(t - t_n; t_n + \tfrac{2}{3} \, \tau_n; \tfrac{1}{3}\big) \,
E_B\big(t - t_n; t_n + \tfrac{1}{3} \, \tau_n; \tfrac{1}{3}\big) \, E_A\big(t - t_n; t_n; \tfrac{1}{3}\big) \\
&\quad + E_C\big(t - t_n; t_n + \tfrac{2}{3} \, \tau_n; \tfrac{1}{3}\big) \,
B\big(t_n + \tfrac{1}{3} \, \tau_n + \tfrac{1}{3} \, (t - t_n)\big) \\ 
&\quad \times E_B\big(t - t_n; t_n + \tfrac{1}{3} \, \tau_n; \tfrac{1}{3}\big) \, E_A\big(t - t_n; t_n; \tfrac{1}{3}\big) \\
&\quad + E_C\big(t - t_n; t_n + \tfrac{2}{3} \, \tau_n; \tfrac{1}{3}\big) \,
E_B\big(t - t_n; t_n + \tfrac{1}{3} \, \tau_n; \tfrac{1}{3}\big) \, A\big(t_n + \tfrac{1}{3} \, (t - t_n)\big) \\ &\quad \times E_A\big(t - t_n; t_n; \tfrac{1}{3}\big)\,, 
\quad t \in [t_n, t_n + \tau_n]\,.
\end{split}  
\end{equation*}
Inserting this identity into the relation for the associated defect 
\begin{equation*}
\begin{split}
&R(t - t_n; t_n) 
= \tfrac{\dd}{\dd t} \, S_F(t - t_n; t_n) 
- F(t) \, S_F(t - t_n; t_n)\,, 
\quad t \in [t_n, t_n + \tau_n]\,,
\end{split}  
\end{equation*}
we arrive at 
\begin{equation*}
\begin{split}
&R(t - t_n; t_n) \\
&= C\big(t_n + \tfrac{2}{3} \, \tau_n + \tfrac{1}{3} \, (t - t_n)\big) \\ 
&\quad \times E_C\big(t - t_n; t_n + \tfrac{2}{3} \, \tau_n; \tfrac{1}{3}\big) \,
E_B\big(t - t_n; t_n + \tfrac{1}{3} \, \tau_n; \tfrac{1}{3}\big) \, E_A\big(t - t_n; t_n; \tfrac{1}{3}\big) \\
&\quad + E_C\big(t - t_n; t_n + \tfrac{2}{3} \, \tau_n; \tfrac{1}{3}\big) \,
B\big(t_n + \tfrac{1}{3} \, \tau_n + \tfrac{1}{3} \, (t - t_n)\big) \\ 
&\quad \times E_B\big(t - t_n; t_n + \tfrac{1}{3} \, \tau_n; \tfrac{1}{3}\big) \, E_A\big(t - t_n; t_n; \tfrac{1}{3}\big) \\
&\quad + E_C\big(t - t_n; t_n + \tfrac{2}{3} \, \tau_n; \tfrac{1}{3}\big) \,
E_B\big(t - t_n; t_n + \tfrac{1}{3} \, \tau_n; \tfrac{1}{3}\big) \, A\big(t_n + \tfrac{1}{3} \, (t - t_n)\big) \\ 
&\quad - \big(A(t) + B(t) + C(t)\big) \\ 
&\quad \times E_C\big(t - t_n; t_n + \tfrac{2}{3} \, \tau_n; \tfrac{1}{3}\big) \, E_B\big(t - t_n; t_n + \tfrac{1}{3} \, \tau_n; \tfrac{1}{3}\big) \, E_A\big(t - t_n; t_n; \tfrac{1}{3}\big)\,,
\quad t \in [t_n, t_n + \tau_n]\,.
\end{split}
\end{equation*}
After suitable reformulation, this expression involves differences of the defining operators at certain intermediate times as well as commutators of defining operators and evolution operators, for instance  
\begin{equation*}
A(s) - A(t)\,, \quad \big[B(t), E_A\big(t - t_n; t_n; \tfrac{1}{3}\big)\big]\,, \quad s, t \in [t_n, t_n + \tau_n]\,.
\end{equation*}
For our purposes, it suffices to specify the arguments of $A, B, C$, but we may omit the arguments of the evolution operators. 
That is, we have 
\begin{equation*}
\begin{split}
&R(t - t_n; t_n) \\
&= C\big(t_n + \tfrac{2}{3} \, \tau_n + \tfrac{1}{3} \, (t - t_n)\big) \, E_C \, E_B \, E_A - C(t) \, E_C \, E_B \, E_A\\
&\quad + E_C \, B\big(t_n + \tfrac{1}{3} \, \tau_n + \tfrac{1}{3} \, (t - t_n)\big) \, E_B \, E_A - B(t) \, E_C \, E_B \, E_A\\
&\quad + E_C \, E_B \, A\big(t_n + \tfrac{1}{3} \, (t - t_n)\big) \, E_A - A(t) \, E_C \, E_B \, E_A \\  
&= \big(C\big(t_n + \tfrac{2}{3} \, \tau_n + \tfrac{1}{3} \, (t - t_n)\big) - C(t)\big) \, E_C \, E_B \, E_A \\
&\quad + E_C \, \big(B\big(t_n + \tfrac{1}{3} \, \tau_n + \tfrac{1}{3} \, (t - t_n)\big) - B(t)\big) \, E_B \, E_A  \\
&\quad + E_C \, E_B \, \big(A\big(t_n + \tfrac{1}{3} \, (t - t_n)\big) - A(t)\big) \, E_A \\
&\quad + \big[E_C, A(t) + B(t)\big] \, E_B \, E_A + E_C \, \big[E_B, A(t)\big] \, E_A\,, 
\quad t \in [t_n, t_n + \tau_n]\,.
\end{split}
\end{equation*}
\begin{enumerate}[(i)]
\item 
On the one hand, in view of a subsequent application to the Bloch--Torrey equation, it is reasonable to assume the temporal smoothness of the defining operators 
\begin{equation*}
\begin{gathered}
A\big(t_n + \tfrac{1}{3} \, (t - t_n)\big) - A(t) = \nO\big(- \tfrac{2}{3} \, (t - t_n)\big) = \nO(\tau_n)\,, \\
B\big(t_n + \tfrac{1}{3} \, \tau_n + \tfrac{1}{3} \, (t - t_n)\big) - B(t) = \nO\big(\tfrac{1}{3} \, \tau_n - \tfrac{2}{3} \, (t - t_n)\big) = \nO(\tau_n)\,, \\
C\big(t_n + \tfrac{2}{3} \, \tau_n + \tfrac{1}{3} \, (t - t_n)\big) - C(t) = \nO\big(\tfrac{2}{3} \, \tau_n - \tfrac{2}{3} \, (t - t_n)\big) = \nO(\tau_n)\,, \\
t \in [t_n, t_n + \tau_n]\,.
\end{gathered}
\end{equation*}
In combination with the available stability results, this implies that the first differences have the desired local orders two. 
More precisely, due to the fact that these terms satisfy
\begin{equation*}
\begin{split}
R^{(1)}(t - t_n; t_n) 
&= \big(C\big(t_n + \tfrac{2}{3} \, \tau_n + \tfrac{1}{3} \, (t - t_n)\big) - C(t)\big) \, E_C \, E_B \, E_A \\
&\quad + E_C \, \big(B\big(t_n + \tfrac{1}{3} \, \tau_n + \tfrac{1}{3} \, (t - t_n)\big) - B(t)\big) \, E_B \, E_A  \\
&\quad + E_C \, E_B \, \big(A\big(t_n + \tfrac{1}{3} \, (t - t_n)\big) - A(t)\big) \, E_A \\
&= \nO(\tau_n)\,, \quad t \in [t_n, t_n + \tau_n]\,,
\end{split}
\end{equation*}
they lead to second-order contributions in the local errors 
\begin{equation*}
L_F^{(1)}(\tau_n; t_n) 
= \int_{0}^{\tau_n} E_F(\tau_n - \zeta; t_n) \, R^{(1)}(\zeta; t_n) \; \dd \zeta = \nO\big(\tau_n^2\big)\,. 
\end{equation*}
Altogether, they yield the desired first-order contribution in the global error estimate
\begin{equation}
\label{eq:GlobalError1}
\begin{split}  
&\prod_{n=0}^{N-1} S_F(\tau_n; t_n) \, \big(u_0 - u(t_0)\big) 
+ \sum_{n=0}^{N-1} \Bigg(\prod_{m=n+1}^{N-1} S_F(\tau_m; t_m)\Bigg) \, L_F^{(1)}(\tau_n; t_n) \, \Bigg(\prod_{m=0}^{n-1} E_F(\tau_m; t_m) \Bigg) 
= \nO(\tau_n)\,,
\end{split}  
\end{equation}
see also~\eqref{eq:GlobalError}.
\item 
On the other hand, for the remaining terms, we employ differentiation of the arising commutators and an integral representation by means of the variation-of-constants formula, in order to argue that the relation 
\begin{equation*}
\begin{aligned}
R^{(2)}(t - t_n; t_n) 
&= \big[E_C, A(t) + B(t)\big] \, E_B \, E_A + E_C \, \big[E_B, A(t)\big] \, E_A 
= \nO(\tau_n)\,, \quad t \in [t_n, t_n + \tau_n]\,,     
\end{aligned}
\end{equation*}
is valid under suitable regularity assumptions.
Analogously to before, this implies 
\begin{equation*}
L_F^{(2)}(\tau_n; t_n) 
= \int_{0}^{\tau} E_F(\tau_n - \zeta; t_n) \, R^{(2)}(\zeta; t_n) \; \dd \zeta = \nO\big(\tau_n^2\big)
\end{equation*}
and yields the desired first-order contribution in the global error estimate
\begin{equation}
\label{eq:GlobalError2}
\begin{split}  
&\prod_{n=0}^{N-1} S_F(\tau_n; t_n) \, \big(u_0 - u(t_0)\big) 
+ \sum_{n=0}^{N-1} \Bigg(\prod_{m=n+1}^{N-1} S_F(\tau_m; t_m)\Bigg) \, L_F^{(2)}(\tau_n; t_n) \, \Bigg(\prod_{m=0}^{n-1} E_F(\tau_m; t_m) \Bigg) 
= \nO(\tau_n)\,.
\end{split}  
\end{equation}
Hereby, we observe that the commutators are of the general form
\begin{equation*}
f(t - t_n) = \big[E_G(t - t_n, \sigma(t_n, \tau_n), c), H(t)\big]\,, 
\quad t \in [t_n, t_n + \tau_n]\,, 
\end{equation*}
and satisfy the initial value problem 
\begin{equation*}
\begin{cases}
f'(t - t_n) = G\big(\sigma(t_n, \tau_n) + c \, (t - t_n)\big) \, f(t - t_n) + g(t - t_n)\,, \quad t \in [t_n, t_n + \tau_n]\,, \\
f(t_n) = 0\,, 
\end{cases}
\end{equation*}
since their time derivatives can be rewritten as 
\begin{equation*}
\begin{split}
f'(t - t_n) 
&= \big[G\big(\sigma(t_n, \tau_n) + c \, (t - t_n)\big) \, E_G(t - t_n, \sigma(t_n, \tau_n), c\big), H(t)\big] 
+ \big[E_G(t - t_n, \sigma(t_n, \tau_n), H'(t)\big] \\
&= G\big(\sigma(t_n, \tau_n) + c \, (t - t_n)\big) \, f(t - t_n) 
+ g(t - t_n)\,, \\
g(t - t_n) 
&= \big[G\big(\sigma(t_n, \tau_n) + c \, (t - t_n)\big), H(t)\big] \, E_G(t - t_n, \sigma(t_n, \tau_n), c\big) 
+ \big[E_G(t - t_n, \sigma(t_n, \tau_n), H'(t)\big]\,, 
\\&
\quad t \in [t_n, t_n + \tau_n]\,. 
\end{split}
\end{equation*}
A solution representation based on the variation-of-constants formula 
\begin{equation*}
f(t - t_n) = \int_{0}^{t - t_n}  E_G\big(t - t_n - \zeta; \sigma(t_n, \tau_n); c\big) \, g(\zeta) \; \dd \zeta\,, \quad t \in [t_n, t_n + \tau_n]\,,
\end{equation*}
justifies the first-order dependence on the time increment. 
\end{enumerate}

\MyParagraph{Summary (Approach 1)}
Altogether, the approach sketched above implies a second-order local error bound 
\begin{equation*}
L_F(\tau_n; t_n) = \nO\big(\tau_n^2\big)\,, \quad n \in \{1, \dots, N\}\,, 
\end{equation*}
and in consequence a first-order convergence rate for the Lie--Trotter splitting method applied to non-autonomous evolutionary problems with three parts
\begin{equation*}
u_n - u(t_n) = \nO(\tau_n)\,, \quad n \in \{1, \dots, N\}\,, 
\end{equation*}
see in particular~\eqref{eq:GlobalError1} and~\eqref{eq:GlobalError2}. 
The necessary stability assumptions and regularity requirements on the problem data are summarised in Theorem~\ref{thm:glob_err}. 
In the subsequent sections, we substantiate them by a theoretical analysis and numerical experiments for the Bloch--Torrey equation. 
That is, we specify the defining operators and justify their differentiability such that 
\begin{equation*}
\begin{gathered}
A(s) - A(t) = \nO(s - t)\,, \quad 
B(s) - B(t) = \nO(s - t)\,, \quad 
C(s) - C(t) = \nO(s - t)\,, \\
s, t \in [t_n, t_n + \tau_n]\,, \quad n \in \{1, \dots, N\}\,. 
\end{gathered}
\end{equation*}
we provide the needed stability bounds for the arising evolution operators
\begin{equation*}
E_F\,, E_A\,, E_B\,, E_C\,,
\end{equation*}
with respect to the norm of the underlying Banach space and certain subspaces, 
and deduce the decisive estimates for the commutators 
\begin{equation*}
\big[A(s), B(t)\big]\,, \quad 
\big[A(s), C(t)\big]\,, \quad 
\big[B(s), C(t)\big]\,, \quad 
s, t \in [t_n, t_n + \tau_n]\,, \quad n \in \{1, \dots, N\}\,. 
\end{equation*}

\MyParagraph{Summary (Approach 2)}
The convergence analysis of the second approach can be decomposed into two steps. 
Thereby, auxiliary results and solution representations by the variation-of-constants formula are used, which were already established for the first approach. 
On the one hand, employing the temporal differentiability of the defining operator, it is seen that the approximation error corresponding to a freezing of time fulfills 
\begin{equation*}
E_F(\tau_n; t_n) - E_{F(t_n)}(\tau_n) = \nO\big(\tau_n^2\big)\,, \quad n \in \{1, \dots, N\}\,,
\end{equation*}
see also~\MyCite{BlanesCasasThalhammer2018,GonzalezOstermannThalhammer2006} for a generalisation to higher-order Magnus-type integrators. 
On the other hand, in the simplified setting of the Lie--Trotter splitting for autonomous evolutionary problems, the arguments developed for the first approach at once yield 
\begin{equation*}
\begin{gathered}
R(t - t_n; t_n) = \tfrac{\dd}{\dd t} \, S_F(t - t_n; t_n) - F(t_n) \, S_F(t - t_n; t_n) = \nO(\tau_n)\,, \quad t \in [t_n, t_n + \tau_n]\,, \\
L_F(\tau_n; t_n) 
= \int_{0}^{\tau_n} E_F(\tau_n - \zeta; t_n) \, R(\zeta; t_n) \; \dd \zeta = \nO\big(\tau_n^2\big)\,, \quad n \in \{1, \dots, N\}\,,
\end{gathered}
\end{equation*}
see also~\MyCite{AuzingerKochThalhammer2015,BlanesCasasGonzalezThalhammer2025} for a stability and convergence analysis of high-order splitting methods. 
Altogether, by means of the provided stability results, this again ensures the desired global convergence rate 
\begin{equation*}
u_n - u(t_n) = \nO(\tau_n)\,, \quad n \in \{1, \dots, N\}\,. 
\end{equation*}

\MyParagraph{Main result}
The following result provides a first-order global error estimate for both approaches explored before, the Lie--Trotter splitting method for a non-autonomous evolutionary problem with three parts~\eqref{eq:IVP},  reformulated as autonomous problem, and the composition of an elementary  Magnus integrator based on piecewise constant interpolants with the Lie--Trotter splitting method for autonomous evolution equations comprising three operators.  

\begin{Theorem}
\label{thm:glob_err}
Consider the initial value problem for a non-autonomous evolution equation 
\begin{equation*}
\begin{cases}
u'(t) = F(t) \, u(t) = A(t) \, u(t) + B(t) \, u(t) + C(t) \, u(t)\,, \quad t \in [t_0, T]\,, \\
u(t_0) \, \text{ given}\,, 
\end{cases}
\end{equation*}
and assume that the linear operators $A(t): D_A \to X$, $B(t): D_B \to X$, and $C(t): D_C \to X$, $t \in [t_0, T]$, are defined on certain subspaces $D_A, D_B, D_C \subseteq X$ of the underlying Banach space $(X, \|\cdot\|_X)$ with non-empty intersection.
Suppose that the associated evolution operators are well-defined and in particular satisfy stability bounds of the form 
\begin{equation*}
\begin{gathered}
\|E_G(\tau;\cdot)\|_{X \leftarrow X} \leq \ee^{\, \text{Const} \, \tau}\,, \quad 
G\in\{F,\,A,\,B,\,C\}\,, \quad 
\tau \in [0, T - t_0]\,, 
\end{gathered}
\end{equation*}
with generic constants, uniformly in the second and third arguments, respectively. 
Assume further that the prescribed initial value and hence the exact solution values are bounded
\begin{equation*}
\|u(t)\|_D \leq \text{Const}\,, \quad t \in [t_0, T]\,, 
\end{equation*}
with respect to the norm of a suitably restricted subspace $D \subseteq X$ that is defined by the requirement that the commutators arising in the expansion of the local error remain bounded
\begin{equation*}
\big\|[A(s),B(t)]\big\|_{X \leftarrow D}
+ \big\|[A(s),C(t)]\big\|_{X \leftarrow D} + \big\|[B(s),C(t)]\big\|_{X \leftarrow D}\leq \text{Const}\,, \quad s, t \in [t_0, T]\,,  
\end{equation*}
and that also their compositions with evolution operators associated with the original problem and the corresponding subproblems remain bounded. 
Under these general requirements, the proposed first-order time integration method based on the Lie--Trotter splitting retains its classical order and satisfies the global error estimate 
\begin{equation*}
\big\|u_n - u(t_n)\big\|_X \leq \text{Const} \, \max_{\nu \in \{0, \dots, n-1\}} \tau_{\nu}\,, \quad
n \in \{1, \dots, N\}\,,
\end{equation*}
where the arising constant is independent of the time increment and the number of time steps. 
\end{Theorem}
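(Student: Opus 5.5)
The proof combines the telescopic identity \eqref{eq:GlobalError} (Lady Windermere's fan) with stability of the numerical and exact evolution operators and a second-order local error bound, treating Approaches~1 and~2 in parallel.

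\textbf{Stability.} By the assumed bounds, $\|S_F(\tau_m;t_m)\|_{X\leftarrow X}\le \ee^{3\,\text{Const}\,\tau_m}$ for both \eqref{eq:Approach1} and \eqref{eq:Approach2}, since each is a product of three subproblem evolution operators. Hence every product $\prod_{m=n+1}^{N-1} S_F(\tau_m;t_m)$ is bounded by $\ee^{3\,\text{Const}\,(T-t_0)}$. The exact products satisfy $\prod_{m=0}^{n-1}E_F(\tau_m;t_m)u(t_0)=u(t_n)$, which is bounded in $D$ by assumption. Taking $u_0=u(t_0)$ (or bounding the first term by stability), it therefore suffices to show $\|L_F(\tau_n;t_n)v\|_X\le \text{Const}\,\tau_n^2\|v\|_D$ for $v=u(t_n)$. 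Summing over $n$ then gives $\text{Const}\sum_n\tau_n^2\le \text{Const}\,(T-t_0)\max_\nu\tau_\nu$.

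\textbf{Local error, Approach~1.} Start from $L_F(\tau_n;t_n)=\int_0^{\tau_n}E_F(\tau_n-\zeta;t_n)R(\zeta;t_n)\,\dd\zeta$ and the splitting $R=R^{(1)}+R^{(2)}$ derived above.

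For $R^{(1)}$, write each difference, e.g. $A(t_n+\tfrac13(t-t_n))-A(t)=\int_t^{t_n+(t-t_n)/3}A'(\sigma)\,\dd\sigma$. Temporal differentiability of $A,B,C$ as maps $D\to X$ (implicit in the hypotheses, and made explicit for Bloch--Torrey later) bounds these in $\|\cdot\|_{X\leftarrow D}$ by $\text{Const}\,\tau_n$. This has to be combined with boundedness of the evolution operators on $D$, i.e.\ $\|E_BE_Av\|_D\le\text{Const}\|v\|_D$, which is part of the assumption that compositions with evolution operators stay bounded.

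For $R^{(2)}$, use the representation $f(t-t_n)=\int_0^{t-t_n}E_G(t-t_n-\zeta)\,g(\zeta)\,\dd\zeta$ for $f=[E_C,A(t)+B(t)]$ and $f=[E_B,A(t)]$. Here $g$ contains $[G(\cdot),H(t)]E_G$, controlled by the commutator bound in $X\leftarrow D$ together with the $D$-stability of the subflows, and the term $[E_G,H'(t)]$, which is bounded by stability since $H'(t):D\to X$ is bounded. Thus $\|f\|_{X\leftarrow D}\le\text{Const}\,\tau_n$. Integrating against the bounded $E_F$ gives $L_F=\nO(\tau_n^2)$ in $X\leftarrow D$.

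\textbf{Approach~2.} Split the local error as
\[
S_F-E_F=\bigl(S_F-E_{F(t_n)}(\tau_n)\bigr)+\bigl(E_{F(t_n)}(\tau_n)-E_F(\tau_n;t_n)\bigr).
\]
The first bracket is the autonomous case of the above computation with frozen operators: the differences $R^{(1)}$ vanish and only the commutator terms remain. For the second bracket, apply variation of constants to $w'=F(t)w$ versus $w'=F(t_n)w$:
\[
E_F(\tau_n;t_n)-E_{F(t_n)}(\tau_n)=\int_0^{\tau_n}E_{F(t_n)}(\tau_n-\zeta)\bigl(F(t_n+\zeta)-F(t_n)\bigr)E_F(\zeta;t_n)\,\dd\zeta .
\]
The integrand is $\nO(\zeta)$ in $X\leftarrow D$, so this bracket is $\nO(\tau_n^2)$. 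This requires a stability bound for the frozen flow $E_{F(t_n)}$, which I would include under $G=F$, uniformly in the frozen time.

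\textbf{Main obstacle.} The delicate point is that all bounds must be taken in $X\leftarrow D$, so $D$-stability of the subflows $E_A,E_B,E_C$ and $E_F$ is needed, not only $X$-stability. I will read it into the hypothesis that compositions with evolution operators remain bounded, and state explicitly that this is where that hypothesis enters. The same applies to the terms $[E_G,H'(t)]$, which additionally require $H'$ to be bounded $D\to X$.
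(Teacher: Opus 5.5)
Your proposal is correct and follows essentially the same route as the paper: the Lady Windermere's fan identity with stability, the defect-based variation-of-constants representation split into $R=R^{(1)}+R^{(2)}$, commutators $[E_G,H(t)]$ handled via their own variation-of-constants formula, and, for Approach~2, the decomposition into a freezing error plus an autonomous splitting error. You also make explicit what the paper leaves inside the phrase ``compositions with evolution operators remain bounded'', namely $D$-stability of the (frozen) subflows and boundedness of $H'(t):D\to X$. One harmless slip, inherited from the paper, is that in the non-autonomous variation-of-constants formulas the propagator should start at the intermediate time, e.g.\ $E_F(\tau_n-\zeta;t_n+\zeta)$; this changes nothing, since the stability bounds are uniform in that argument.
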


\begin{Remark}
\label{rem:differentiability}
For the first approach, a sufficient abstract hypothesis for the differentiability statements above, on Banach spaces $Y \hookrightarrow X$ ($Y=X$ for pure multiplication operators), is
\begin{equation}\label{ass:timereg_abstract}
A,\, B,\, C \in W^{1,\infty}\big(0,T;\mathcal{L}(Y,X)\big)\,.
\end{equation}
Indeed, \eqref{ass:timereg_abstract} directly yields, for $s,t\in[0,T]$,
\begin{equation*}
\|A(t)-A(s)\|_{\mathcal{L}(Y,X)} \le M_A \, |t-s|\,, \quad \|A'(t)\|_{\mathcal{L}(Y,X)} \le M_A\,,
\end{equation*}
and analogously for $B, C$ (with constants $M_B, M_C$), which is precisely the differentiability and boundedness required above. For the Bloch--Torrey equation (Section~\ref{sec:BT}), Hypothesis~\eqref{ass:timereg_abstract} is verified concretely in terms of the physical coefficients $\magind, \velo$ in Proposition~\ref{prop:timereg_BT}, where in fact $B' \equiv 0$ exactly, since $B$ does not depend on time there.
For the second approach, since $F(t) = A(t) + B(t) + C(t)$, Hypothesis~\eqref{ass:timereg_abstract} gives $F' = A' + B' + C'$ bounded; for the Bloch--Torrey equation, where $B' \equiv 0$, this bound is made explicit at the end of Proposition~\ref{prop:timereg_BT}.
\end{Remark}

\section{A class of problems containing multiplication and diffusion-advection operators} \label{sec:MultDiffAdv}
We now somewhat specify the setting to operators $A$ and $B$ being multiplication operators and $C$ a component wise diffusion-advection operator. 
This is inspired by the convective Bloch-Torrey equation \eqref{eqn:bloch-torrey}, in which each of the multiplication operators models a different physical effect (namely $A$ rotation, that is subject to a time dependent control and thus inherently time dependent and $B$ relaxation), but also comprises many other relevant applications in which diffusion and/or advection as well as a time dependent bilinear control term play a role.

We consider the evolution equation on a bounded domain $\Omega\subseteq\mathbb{R}^d$, $d\in\mathbb{N}$, $d\geq3$ and at each time instance $t$, the state is a vector field $u(t):\Omega\to \mathbb{R}^d$.
For simplicity of exposition (and since it is the commonly used boundary condition in \eqref{eqn:bloch-torrey_1}), we impose homogeneous Dirichlet boundary conditions 
$u(t)=0$ on $\partial\Omega$.

The multiplication operators $A(t)$ and $B(t)$ are defined by matrix valued functions with entries $a_{ij}(t)$, $b_{ij}(t)$ $\in L^\infty(\Omega)$, which by a slight abuse of notation we again denote by $A(t)$ and $B(t)$
\begin{equation}\label{defA}
(A(t)w)_i=\sum_{j=1}^d a_{ij}(t) w_j,\quad i\in\{1,\ldots, d\}, \quad w=(w_1,\ldots,w_d)\in L^2(\Omega;\mathbb{R}^d) 
\end{equation}
and likewise for $B(t)$.

We assume the diffusion and advection to act component wise, that is, with a uniformly positive definite diffusion matrix 
\begin{equation}\label{ass:D}
D\in L^\infty(\Omega;\mathbb{R}^{d\times d}), \quad 
\xi^T\,D(x)\,\xi \geq \underline{c}_D>0 \ \text{for all } \xi\in \mathbb{R}^d, \text{ and almost all } x\in \Omega
\end{equation}
and a possibly time dependent velocity
\begin{equation}\label{ass:v}
v\in L^\infty(0,T;L^2(\Omega;\mathbb{R}^d)),
\end{equation}
we define 
\begin{equation}\label{defC}
(C(t) w )_i = \nabla\cdot (D\nabla w_i)-\velo(t)\cdot\nabla w_i, \qquad
w=(w_1,\ldots,w_d)\in H^2(\Omega;\mathbb{R}^d).
\end{equation}

\subsection{Evolution estimates}
\subsubsection{Estimate of $E_{A}$ and $E_{B}$}
Existence of a solution $E_{A}(t-t_0,t_0,q)u_0=:u_A(t)$ to the subproblem 
\begin{equation}\label{ivp_A} 
\tfrac{d}{d t} u_A(t)=A(t_0+q(t-t_0))u_A(t), \ t\in (t_0,t_0+\tau) \quad u_A(t_0)=u_0
\end{equation}
follows from standard ODE theory, applied pointwise in space. 
Here $q\in\{\frac13,\,\frac23,\,1\}$. For simplicity of exposition (and by a simple temporal rescaling argument, without loss of generality) we set $q=1$, $t_0=0$, $\tau=T$ here.

The following results additionally provides some spatial regularity.

\begin{Proposition}\label{prop:evolutionest_mult}
For any $m\in\mathbb{N}$, $A\in L^\infty(0,T;W^{1,3}(\Omega;\mathbb{R}^{d\times d})\cap H^m(\Omega;\mathbb{R}^{d\times d})$, the evolution operator $E_{A}(t-t_0,t_0,q)u_0$ is well defined by \eqref{ivp_A} and satisfies
\[
\|E_{A}(t-t_0,t_0,q)\|_{H^m(\Omega;\mathbb{R}^d)\leftarrow H^m(\Omega;\mathbb{R}^d)} \leq K_A
\]
with $K_A$ depending only on $m$, $d$, $\Omega$, $T$, and $\|A\|_{L^\infty(0,T;W^{1,3}(\Omega;\mathbb{R}^{d\times d})\cap H^m(\Omega;\mathbb{R}^{d\times d})}$.
\end{Proposition}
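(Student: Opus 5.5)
We reduce, as indicated before the statement, to $q=1$, $t_0=0$, $\tau=T$, and write $u(t)=E_A(t,0,1)u_0$. Then $u$ solves the pointwise-in-space linear ODE $u'(t)=A(t)u(t)$, $u(0)=u_0$.

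\textbf{Well-posedness in $L^2$.} Since $W^{1,3}(\Omega)\hookrightarrow L^\infty(\Omega)$ fails only at the borderline, we do not rely on it. Instead we use that $A(t)$ is a bounded multiplication operator on $L^2$ as soon as its entries are in $L^\infty$. For $m$ large ($m>d/2$) this follows from $H^m\hookrightarrow L^\infty$. To keep the argument uniform, the first step establishes existence and uniqueness of $u\in W^{1,\infty}(0,T;L^2)$ by Picard iteration on $u(t)=u_0+\int_0^tA(s)u(s)\,ds$ in the space $C([0,T];H^m)$ directly. This avoids any separate $L^2$ theory, since the $H^m$ estimate below makes the right-hand side a bounded linear Volterra operator on $C([0,T];H^m)$, whose Neumann series converges for every $T$.

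\textbf{The $H^m$ estimate.} The core is a product estimate
\[
\|A(s)w\|_{H^m}\le c\,\|A(s)\|_{W^{1,3}\cap H^m}\,\|w\|_{H^m},
\]
with $c=c(m,d,\Omega)$. Expand $\partial^\alpha(A w)$, $|\alpha|\le m$, by Leibniz into terms $\partial^\beta A\,\partial^{\alpha-\beta}w$ and treat three cases.
\begin{itemize}
\item[(i)] $\beta=0$. This needs $A\in L^\infty$. For $m\ge1$ we would rather handle it through $\|A\partial^\alpha w\|_{L^2}$.
\item[(ii)] $|\beta|=1$. Use H\"older with $\partial^\beta A\in L^3$ and $\partial^{\alpha-\beta}w\in H^1\hookrightarrow L^6$ for $d\le3$. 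This is exactly where the $W^{1,3}$ hypothesis enters, which suggests the paper has $d=3$ in mind.
\item[(iii)] $|\beta|\ge2$. Use Gagliardo--Nirenberg interpolation between $\partial^\beta A\in H^{m-|\beta|}$ and the remaining derivatives of $w$ in suitable $L^p$ spaces. This is the Moser-type estimate $\|fg\|_{H^m}\lesssim\|f\|_{L^\infty}\|g\|_{H^m}+\|f\|_{H^m}\|g\|_{L^\infty}$, adapted with the $W^{1,3}$ norm.
\end{itemize}

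\textbf{Main obstacle.} The low-order factors require $L^\infty$ control, both of $A$ in case (i) and of $w$ in case (iii) when $m$ is small. I would first try to obtain this by restricting to the relevant regime $d=3$ and noting that for $m\ge2$ we have $H^m\hookrightarrow L^\infty$ for both $A$ and $w$, so all terms close. For $m=1$ the only potentially problematic term is $A\nabla w$. Here I would use the ODE structure instead: $u(t,x)=\Phi(t,x)u_0(x)$ with the fundamental matrix $\Phi$. If $A$ is only in $L^\infty_tW^{1,3}_x$ without $L^\infty_x$, pointwise boundedness of $\Phi$ is unclear. I expect to need (and would state) that $\|A\|_{L^\infty(\Omega)}$ is controlled, e.g.\ via $H^m$ with $m\ge2$, or to assume $A\in L^\infty$ additionally.

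\textbf{Conclusion.} Given the product estimate, we get
\[
\|u(t)\|_{H^m}\le\|u_0\|_{H^m}+c\,\|A\|_{L^\infty(W^{1,3}\cap H^m)}\int_0^t\|u(s)\|_{H^m}\,ds .
\]
Gronwall's lemma then yields $K_A=\exp\big(cT\|A\|_{L^\infty(0,T;W^{1,3}\cap H^m)}\big)$, which depends only on the quantities listed in the statement. Undoing the rescaling $t\mapsto t_0+q(t-t_0)$ only changes $T$ to $qT\le T$, so the bound is uniform in $t_0$ and $q$.
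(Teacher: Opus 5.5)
Your argument is correct and takes a genuinely different route from the paper.

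\textbf{Comparison of the two routes.} You make $A(t)$ a bounded operator on $H^m$ through a Moser-type product estimate. The subproblem then becomes a linear Volterra equation in $C([0,T];H^m)$, and Picard iteration plus Gronwall give existence, uniqueness, continuity in time and $K_A=\exp(cT\|A\|)$ in one stroke.

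The paper instead regularizes by vanishing viscosity, $\partial_t u^\varepsilon+\varepsilon(-\Delta_D)^m u^\varepsilon=A(t)u^\varepsilon$, and tests with $(-\Delta_D)^m u^\varepsilon$. It controls the commutator $\nabla^m(Au)-A\nabla^m u$ by the Kato--Ponce-type Leibniz estimate \eqref{eq:Leibniz}, using $\nabla A\in L^d$ and $\nabla^m A\in L^2$, together with $u\in L^\infty$ when $d<2m$. It then applies Gronwall uniformly in $\varepsilon$, extracts a weak-$*$ limit, and checks that the limit solves the ODE.

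Your route is more elementary. It needs no compactness argument and gives uniqueness directly. It also imposes no compatibility conditions on $u_0$ at $\partial\Omega$, whereas the regularization builds in $(-\Delta_D)^k u^\varepsilon=0$ on $\partial\Omega$.

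What the energy route buys is structure. $A$ itself enters the top-order estimate only through $\int_\Omega A\nabla^m u\cdot\nabla^m u\,dx$, and this vanishes when $A(t,x)$ is pointwise skew-symmetric, as the Bloch--Torrey rotation of Section~\ref{sec:BT} is. In that case the energy route could dispense with $L^\infty$ control of $A$, which a norm-based product estimate cannot see. The paper, however, does not exploit this.

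\textbf{The $m=1$ obstacle is genuine.} It is not an artifact of your method. The paper's own estimate \eqref{enestA} contains $\|A\|_{L^\infty(0,T;L^\infty(\Omega))}$, so it tacitly makes the same extra assumption. Without that assumption the statement is false. Take $\Omega=B_1(0)\subset\mathbb{R}^3$ and $A=f_M I$ with $f_M=\min\{\log\log(e/|x|),M\}$.
\begin{itemize}
\item The norms stay bounded: $\|\nabla f_M\|_{L^3}^3\le\int_{B_1}|x|^{-3}\log^{-3}(e/|x|)\,dx=2\pi$, and $\|f_M\|_{L^3}$ is bounded uniformly in $M$.
\item Yet $f_M\equiv M$ on the ball of radius $e^{1-e^M}$. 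For $u_0\in C_c^\infty$ supported in that ball, $E_A(t,0,1)u_0=e^{tM}u_0$.
\item Hence $\|E_A(t,0,1)\|_{H^1\leftarrow H^1}\ge e^{tM}$ is unbounded as $M\to\infty$, while $\|A\|_{L^\infty(0,T;W^{1,3}\cap H^1)}$ stays bounded.
\end{itemize}

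For $d=3$ and $m\ge 2$ the $L^\infty$ bound is automatic from $H^m\hookrightarrow L^\infty$. So your proof covers exactly the cases in which the claim holds, and $m=1$ under the hypothesis $A\in L^\infty(0,T;L^\infty(\Omega))$ that you propose to add. Your reading $d=3$ is also consistent with the paper, whose Leibniz step requires $\nabla A\in L^d$.
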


\subsubsection{Estimate of $E_{C}$ and of $E_{F}$}
Since the commutators lead to an at most first order derivative loss in this section and an at most second derivative loss in section~\ref{sec:BT}, we only require estimates of $E_C$ of $E_F$ in 
$L^\infty(0,T; H^s(\Omega;\mathbb{R}^d))$ for $s\in\{0,1,2\}$.
Existence and uniqueness of solutions in $L^2(0,T;H_0^1(\Omega;\mathbb{R}^d))\cap L^\infty(0,T; L^2(\Omega;\mathbb{R}^d))\cap H^1(0,T;H^{-1}(\Omega;\mathbb{R}^d))$ to the underlying parabolic initial value problems with inital data in $ L^2(\Omega;\mathbb{R}^d)$ can be obtained from standard textbook results, cf. e.g., \cite[Theorem 3, Section 7.1]{EvansBook}. We therefore only provide the relevant energy estimates.
Consider 
\begin{equation}\label{ivp_C}
\begin{aligned}
&\tfrac{d}{d t} u_C(t)=\nabla\cdot (D\nabla u_C(t))-\velo(t)\cdot\nabla u_C(t), \ t\in (t_0,t_0+\tau) \\
& u_C(t_0)=u_0
\end{aligned}
\end{equation}
and
\begin{equation}\label{ivp_F}
\begin{aligned}
&\tfrac{d}{d t} u_F(t)=(A(t)+B(t))u_F(t)+\nabla\cdot (D\nabla u_F(t))-\velo(t)\cdot\nabla u_F(t), \ t\in (t_0,t_0+\tau) \\
&u_F(t_0)=u_0, 
\end{aligned}
\end{equation}
where we assume the diffusion and advection operators to act in the component wise sense of \eqref{defC}.
Since we do not make any sign condition on the multiplication operators that could potentially improve the energy estimates, we treat the estimate of $u_C$ as a particular case of $u_F$ with $A+B\equiv0$. 

\begin{Proposition}\label{prop:evolutionest_F}
For any $m\in\{0,1\}$ and $A+B\in L^\infty(0,T;L^\infty(\Omega;\mathbb{R}^{d\times d}))$, $\velo\in L^\infty(0,T;L^\infty(\Omega))$, the evolution operator $E_{F}(t-t_0,t_0,q)u_0$ is well defined by \eqref{ivp_F} and satisfies
\[
\|E_{F}(t-t_0,t_0,q)\|_{H^m(\Omega;\mathbb{R}^d)\leftarrow H^m(\Omega;\mathbb{R}^d)} \leq K_F
\]
with $K_F$ depending only on $m$, $d$, $\Omega$, $T$, $\|A+B\|_{L^\infty(0,T;L^\infty(\Omega;\mathbb{R}^{d\times d}))}$, and $\|\velo\|_{L^\infty(0,T;L^\infty(\Omega))}$.
\\
If additionally $\Omega$ and $D\in W^{1,d}(\Omega)$ are such that the elliptic regularity estimate
\begin{equation}\label{ell-reg}
\|v\|_{H^2(\Omega)}\leq 
\overline{c}\Bigl(\|\nabla\cdot(D\nabla v)\|_{L^2(\Omega)}+\|v\|_{L^2(\Omega)}\Bigr)
\end{equation}
holds, then 
\[
\|E_{F}(t-t_0,t_0,q)\|_{H^2(\Omega;\mathbb{R}^d)\leftarrow H^2(\Omega;\mathbb{R}^d)} \leq K_F
\]
with $K_F$ depending only on $m$, $d$, $\Omega$, $T$, $\|A+B\|_{L^\infty(0,T;L^\infty(\Omega;\mathbb{R}^{d\times d}))}$, $\|\velo\|_{L^\infty(0,T;L^\infty(\Omega))}$, and $\overline{c}$.
\end{Proposition}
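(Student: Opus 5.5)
We establish the bounds by energy estimates for \eqref{ivp_F}, carried out for a Galerkin approximation (or for smooth data, then extended by density), with $t_0=0$, $q=1$ as in the preceding subsection. The diffusion and advection act componentwise, so it suffices to test the system in each component and sum. The coupling enters only through the multiplication term $(A+B)u_F$. This term is bounded in $L^2$ by $\|A+B\|_{L^\infty}\|u_F\|_{L^2}$ and needs no spatial regularity of $A+B$ for the cases $m\in\{0,1\}$. Throughout, write $M:=\|A+B\|_{L^\infty(0,T;L^\infty)}$ and $V:=\|\velo\|_{L^\infty(0,T;L^\infty)}$.

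\emph{Case $m=0$.} Test with $u_F$. Using \eqref{ass:D} and the homogeneous Dirichlet condition, we get
\[
\tfrac12\tfrac{d}{dt}\|u_F\|_{L^2}^2+\underline{c}_D\|\nabla u_F\|_{L^2}^2\le M\|u_F\|_{L^2}^2+V\|\nabla u_F\|_{L^2}\|u_F\|_{L^2}.
\]
Young's inequality absorbs the gradient into the diffusion term, leaving a constant $M+V^2/(2\underline{c}_D)$ in front of $\|u_F\|_{L^2}^2$. Gronwall's inequality then gives $\|u_F(t)\|_{L^2}\le \ee^{Ct}\|u_0\|_{L^2}$.

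\emph{Case $m=1$.} Test with $-\nabla\cdot(D\nabla u_F)$, which equals $\tfrac12\tfrac{d}{dt}(D\nabla u_F,\nabla u_F)$ after integrating by parts; this is legitimate because $\partial_t u_F$ vanishes on $\partial\Omega$. The resulting inequality is
\[
\tfrac12\tfrac{d}{dt}(D\nabla u_F,\nabla u_F)+\|\nabla\cdot(D\nabla u_F)\|^2\le (M\|u_F\|+V\|\nabla u_F\|)\|\nabla\cdot(D\nabla u_F)\|.
\]
Young's inequality absorbs the right-hand side into the dissipative term. The quantity $(D\nabla u,\nabla u)$ is equivalent to $\|\nabla u\|^2$, bounded below by $\underline c_D$ and above by $\|D\|_{L^\infty}$. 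Combining with the $m=0$ bound and applying Gronwall yields the $H^1$ estimate, using Poincaré or the full $H^1$ norm.

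\emph{Case $m=2$.} This is the main obstacle. We cannot differentiate in space twice, because $A+B$ is only assumed $L^\infty$. Instead we differentiate the equation in time, which requires care since the coefficients are only $L^\infty$ in time. The cleaner route is to estimate $w:=\nabla\cdot(D\nabla u_F)$ in $L^2$ and then invoke \eqref{ell-reg}. Formally, test the equation with $\partial_t w$, or equivalently test the time-differentiated equation with $\partial_t u_F$. A more robust alternative, which I would try first, is to bound $\|\partial_t u_F\|_{L^2}$ and then recover $w=\partial_t u_F-(A+B)u_F+\velo\cdot\nabla u_F$, estimating the last term by the $H^1$ bound. For $\partial_t u_F$, the maximal regularity estimate obtained in the $m=1$ step already controls $\int_0^t\|w\|^2$. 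To obtain a pointwise-in-time bound on $\|w(t)\|$, I would test with $\partial_t w$ in a Galerkin basis of eigenfunctions of $-\nabla\cdot(D\nabla\cdot)$. This gives
\[
\tfrac{d}{dt}\tfrac12\|w\|^2\cdot(\ldots)\quad\text{namely}\quad \tfrac12\tfrac{d}{dt}(D\nabla\partial_t u,\ldots),
\]
and the terms where $\velo\cdot\nabla u$ and $(A+B)u$ hit $\partial_t w$ must be integrated by parts in time. Those integrations by parts would need $\partial_t(A+B)$ and $\partial_t\velo$, which are not assumed. I therefore expect the intended argument to be the energy estimate applied to the operator $-\nabla\cdot(D\nabla\cdot)$ in the graph norm, i.e.\ the $m=1$ computation repeated with $u$ replaced by $w$. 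The commutator of $\nabla\cdot(D\nabla\cdot)$ with the lower-order terms is handled using $D\in W^{1,d}$, Sobolev embeddings $H^1\hookrightarrow L^{2d/(d-2)}$, and \eqref{ell-reg}, absorbing top-order contributions into the dissipation $\|\nabla w\|^2$. The initial value satisfies $\|w(0)\|\lesssim\|u_0\|_{H^2}$. Gronwall closes the estimate, and \eqref{ell-reg} converts the $L^2$ bound on $w$ into the stated $H^2$ bound with a constant depending additionally on $\overline c$.
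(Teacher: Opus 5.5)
Your $m=0$ and $m=1$ steps are correct and essentially the paper's. The only difference is that for $H^1$ the paper tests with $\partial_t u_F$ rather than $-\nabla\cdot(D\nabla u_F)$. This is an equivalent choice: it yields $\int_0^t\|\partial_t u_F\|^2$ instead of $\int_0^t\|\nabla\cdot(D\nabla u_F)\|^2$.

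The gap is the $H^2$ case. The route you finally settle on is to redo the $m=1$ computation for $w=\nabla\cdot(D\nabla u_F)$. That means applying $\nabla\cdot(D\nabla\,\cdot)$ to $(A+B)u_F$ and to $\velo\cdot\nabla u_F$. The commutators this generates contain $\nabla(A+B)$, $\nabla^2(A+B)$ and $\nabla\velo$. Even after one integration by parts against $w$, at least $\nabla(A+B)$ and $\nabla\velo$ survive. Neither $D\in W^{1,d}$, nor Sobolev embeddings, nor \eqref{ell-reg} can supply them. This is exactly the obstruction you yourself noted at the start of the $m=2$ paragraph. In addition, $w$ does not satisfy the Dirichlet condition (on $\partial\Omega$ one has $w=\velo\cdot\nabla u_F$), so the integrations by parts of the $m=1$ computation produce boundary terms. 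So no $H^2$ bound is actually obtained.

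Your objection to the other route is accurate, and it applies verbatim to the paper's proof. The paper tests with $\nabla\cdot(D\nabla\partial_t u_F)=\partial_t w$. This gives $\tfrac12\tfrac{d}{dt}\|w\|^2+(D\nabla\partial_t u_F,\nabla\partial_t u_F)=(\velo\cdot\nabla u_F-(A+B)u_F,\partial_t w)$. The paper then asserts that the right-hand side ``can be further estimated analogously to above''. It cannot. $\|\partial_t w\|_{L^2}$ carries two spatial derivatives of $\partial_t u_F$, whereas the dissipation only controls $\int_0^t\|\nabla\partial_t u_F\|^2$. Shifting a derivative off $\partial_t w$ needs either $\nabla(A+B)$ and $\nabla\velo$ (plus a boundary term from the advection), or $\partial_t(A+B)$ and $\partial_t\velo$. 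One should not expect the bound under the stated hypotheses alone. For a source that is merely in $L^\infty(0,T;L^2)$, parabolic maximal regularity gives $L^p(0,T;H^2)$ for $p<\infty$, but in general not $L^\infty(0,T;H^2)$.

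The estimate does close in two situations.

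\emph{Time-independent $A+B$ and $\velo$} (e.g.\ the frozen evolutions $E_{F(t_n)}$, $E_{C(t_n)}$ of Approach~2). Here $\|F\,\ee^{tF}u_0\|=\|\ee^{tF}Fu_0\|$. Since $\underline{c}_D\|\nabla u\|^2\le\|\nabla\cdot(D\nabla u)\|\,\|u\|$, the graph norm of $F$ is equivalent to $\|\nabla\cdot(D\nabla u)\|+\|u\|$, and \eqref{ell-reg} concludes.

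\emph{Time-dependent coefficients with $\partial_t(A+B),\partial_t\velo\in L^\infty(0,T;L^\infty)$.} This is what \eqref{ass:timereg_BT} provides in the Bloch--Torrey case, and then your option of bounding $z=\partial_t u_F$ works. $z$ solves the same equation with source $\partial_t(A+B)u_F-\partial_t\velo\cdot\nabla u_F$, which is bounded in $L^2$ by the $H^1$ estimate. Moreover $\|z(0)\|=\|F(0)u_0\|\lesssim\|u_0\|_{H^2}$, so the $m=0$ estimate bounds $\|z(t)\|$. With your constants $M$, $V$, one then has $\|w(t)\|\le\|z(t)\|+M\|u_F(t)\|+V\|\nabla u_F(t)\|$, and \eqref{ell-reg} gives the $H^2$ bound.
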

\textit{Proof.} See the Appendix.

\subsection{Commutator estimates}

We write down all commutators at some time instance $t$, which can be thought of being fixed to $t=t_n$ for the frozen version of the splitting method.

The multiplication commutation $[A(t),B(t)]$ can trivially be estimated by 
\[
\|[A(t),B(t)]\|_{L^2(\Omega;\mathbb{R}^d)\leftarrow L^2(\Omega;\mathbb{R}^d)}
\leq 2\|A\|_{L^\infty(0,T;L^\infty(\Omega;\mathbb{R}^{d\times d})}
\|B\|_{L^\infty(0,T;L^\infty(\Omega;\mathbb{R}^{d\times d})}
\]
and the commutator estimate $[C(t),B(t)]$ goes analogously to the one for $[C(t),A(t)]$. It thus only remains to bound the latter.

\subsubsection{Estimate of the commutator [$C(t),A(t)$]}
Since $C$ acts the same way on each component, we can write it as 
\[
\begin{aligned}
&C(t)=\mathcal{L}(t)\, I= (C_{\text{diff}}+C_{\text{adv}}(t)) I \\
&\text{ where }\ C_{\text{diff}} \phi=\nabla\cdot(D\nabla \phi), \quad C_{\text{adv}}(t)=-\velo(t)\nabla\cdot \phi, \quad \phi\in H^2(\Omega)
\end{aligned}
\]
For a general multiplication operator $A(t)$ defined by \eqref{defA} with functions $a_{ij}(t)\in L^\infty(\Omega)$ (further regularity requirements will be made in due course), we consider the diffusion and advection commutators separately, using the identity 
\[
[C(t),A(t)]=[\mathcal{L}(t) I,A(t)]=
[C_{\text{diff}} I,A(t)]+[C_{\text{adv}} I,A(t)].
\]

\begin{Proposition}\label{prop:commutatorestCA}
For 
\begin{equation}\label{ass:ADV_comm}
\begin{aligned}
&A\in L^\infty(0,T;W^{1,\infty}(\Omega;\mathbb{R}^{d\times d})\cap W^{2,d}(\Omega;\mathbb{R}^{d\times d})), \\
&D\in L^\infty(\Omega;\mathbb{R}^{d\times d})\cap W^{1,d}(\Omega;\mathbb{R}^{d\times d}), \\
&\velo\in L^\infty(0,T;L^{2d/(d-2)}(\Omega;\mathbb{R}^d)),
\end{aligned}
\end{equation}
and $C$ as defined in \eqref{defC}, the commutator $[C(t),A(t)]$ satisfies
\[
\|[C(t),A(t)]\|_{L^2(\Omega;\mathbb{R}^d)\leftarrow H^1(\Omega;\mathbb{R}^d)} \leq K_{CA}
\]
with $K_{CA}$ depending only on $m$, $d$, $\Omega$, $T$, and the norms of $A$, $D$, $v$ appearing in \eqref{ass:ADV_comm}.
\end{Proposition}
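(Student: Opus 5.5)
We write out the commutator componentwise and check that, after the Leibniz rule, the second derivatives of $w$ cancel. For $w\in H^2(\Omega;\mathbb{R}^d)$ (dense in $H^1$; the bound then extends by continuity),
\[
([C(t),A(t)]w)_i=\sum_{j=1}^d\Bigl(\nabla\cdot(D\nabla(a_{ij}w_j))-a_{ij}\nabla\cdot(D\nabla w_j)\Bigr)
-\sum_{j=1}^d\Bigl(\velo\cdot\nabla(a_{ij}w_j)-a_{ij}\,\velo\cdot\nabla w_j\Bigr).
\]
The advection part collapses to $-\sum_j w_j\,\velo\cdot\nabla a_{ij}$, which contains no derivatives of $w$. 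The diffusion part becomes
\[
\sum_{j=1}^d\Bigl(\nabla\cdot(w_jD\nabla a_{ij})+D\nabla a_{ij}\cdot\nabla w_j\Bigr)
=\sum_{j=1}^d\Bigl(2\,D\nabla a_{ij}\cdot\nabla w_j+w_j\,\nabla\cdot(D\nabla a_{ij})\Bigr),
\]
using the symmetry of $D$. The second-order terms in $w$ thus cancel, and only first derivatives of $w$ remain. We estimate each term in $L^2$ by Hölder's inequality and the Sobolev embedding $H^1(\Omega)\hookrightarrow L^{2d/(d-2)}(\Omega)$, valid for $d\ge3$ on a bounded (Lipschitz) domain.

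\textbf{Terms with $\nabla w_j$.} We have $\|D\nabla a_{ij}\cdot\nabla w_j\|_{L^2}\le\|D\|_{L^\infty}\|\nabla a_{ij}\|_{L^\infty}\|\nabla w_j\|_{L^2}$, which uses $A\in L^\infty(0,T;W^{1,\infty})$.

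\textbf{Terms $w_j\nabla\cdot(D\nabla a_{ij})$.} Expand $\nabla\cdot(D\nabla a)=(\nabla\cdot D)\cdot\nabla a+D:\nabla^2a$, with the divergence taken row-wise. Then
\[
\|w_j(\nabla\cdot D)\cdot\nabla a\|_{L^2}\le\|w_j\|_{L^{2d/(d-2)}}\|\nabla D\|_{L^d}\|\nabla a\|_{L^\infty},\qquad
\|w_j\,D:\nabla^2a\|_{L^2}\le\|D\|_{L^\infty}\|\nabla^2a\|_{L^d}\|w_j\|_{L^{2d/(d-2)}}.
\]
Both use the Hölder pairing $\frac{d-2}{2d}+\frac1d=\frac12$ and are bounded by a constant times $\|w\|_{H^1}$. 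This is exactly where $D\in W^{1,d}$ and $A\in W^{2,d}$ enter.

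\textbf{Advection term.} This is the step I expect to be most delicate, since the hypothesis gives only $\velo\in L^{2d/(d-2)}$. I would bound
\[
\|w_j\,\velo\cdot\nabla a_{ij}\|_{L^2}\le\|\nabla a_{ij}\|_{L^\infty}\,\|\velo\|_{L^{2d/(d-2)}}\,\|w_j\|_{L^d}.
\]
This relies on $\frac{d-2}{2d}+\frac1d=\frac12$ once more, and on $H^1\hookrightarrow L^d$, which holds because $d\le 2d/(d-2)$ for $d\le4$. For $d>4$ I would instead use $H^1\hookrightarrow L^{2d/(d-2)}$ together with a bounded domain, which requires $\velo\in L^d$. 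For $d=3$ the weaker assumption on $\velo$ suffices; in general one may alternatively simply use $\velo\in L^\infty$ as in Proposition~\ref{prop:evolutionest_F}.

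Summing over $i,j$ and taking the supremum over $t$ gives
\[
\|[C(t),A(t)]\|_{L^2\leftarrow H^1}\le K_{CA},
\]
with $K_{CA}$ depending only on $d$, $\Omega$ (through the Sobolev constants), and the stated norms of $A$, $D$, $\velo$.
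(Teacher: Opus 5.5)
Your proposal is correct and follows the paper's proof essentially step by step. It uses the same Leibniz expansion of $[C_{\text{diff}},a_{ij}]$ and $[C_{\text{adv}}(t),a_{ij}]$, showing that only first derivatives of $w$ survive, and the same H\"older pairing $\frac{d-2}{2d}+\frac1d=\frac12$ with $H^1(\Omega)\hookrightarrow L^{2d/(d-2)}(\Omega)$.

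Your caution about the advection term is justified. The paper bounds it by $\|w_k\|_{L^{2d/(d-2)}}\|\velo(t)\|_{L^d}\|\nabla a_{ik}(t)\|_{L^\infty}$, but the stated hypothesis $\velo\in L^\infty(0,T;L^{2d/(d-2)})$ controls $\|\velo(t)\|_{L^d}$ only for $d\le 4$, where your swapped pairing is equivalent. For $d>4$ an extra assumption such as $\velo\in L^\infty(0,T;L^d)$ is genuinely required.
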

\textit{Proof.} See the Appendix.

Applying Theorem~\ref{thm:glob_err} with $X=L^2(\Omega;\mathbb{R}^d)$, $D= H^1(\Omega;\mathbb{R}^d)$ we obtain the following global error estimate.
\begin{Corollary}
For $A$, $B$, $C$ defined by \eqref{defA},
\eqref{defC}, under conditions \eqref{ass:D}, \eqref{ass:ADV_comm} (and analogously for $B$), together with the temporal regularity \eqref{ass:timereg_abstract}, the global error estimate
\begin{equation*}
\|u_n - u(t_n)\|_{L^2(\Omega)^d} \leq  C \max_{\nu \in \{0, \dots, n-1\}} \tau_{\nu}\,, \quad
n \in \{1, \dots, N\} 
\end{equation*}
holds with a constant $C>0$ independent of $N$ and $\tau_{\nu}$, $\nu \in \{0, \dots, N-1\}$. 
\end{Corollary}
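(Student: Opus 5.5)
The plan is to verify the hypotheses of Theorem~\ref{thm:glob_err} with $X=L^2(\Omega;\mathbb{R}^d)$ and $D=H^1(\Omega;\mathbb{R}^d)$ (more precisely $H^1\cap H^1_0$, compatible with the Dirichlet conditions), and then invoke that theorem.

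\emph{Stability in $X$.} For $E_A$ and $E_B$ we apply Proposition~\ref{prop:evolutionest_mult} with $m=0$; alternatively, a pointwise Gronwall argument gives $\|E_A(\tau;\cdot)\|_{X\leftarrow X}\le \ee^{\tau\|A\|_{L^\infty L^\infty}}$, which is of the required exponential form. For $E_C$ and $E_F$ we use Proposition~\ref{prop:evolutionest_F} with $m=0$, where $E_C$ is the special case $A+B\equiv0$. Its hypothesis $\velo\in L^\infty(0,T;L^\infty)$ has to be added to or extracted from \eqref{ass:v}. Tracking the Gronwall constant in the energy estimate gives the bound $\ee^{\text{Const}\,\tau}$, uniform in the initial time and in the scaling $q$, because the coefficient norms are taken in $L^\infty$ in time.

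\emph{Regularity space and bounded commutators.} First, the exact solution must satisfy $\|u(t)\|_{H^1}\le\text{Const}$. This follows from Proposition~\ref{prop:evolutionest_F} with $m=1$, applied to $u(t)=E_F(t-t_0;t_0)u(t_0)$ with $u(t_0)\in H^1_0$. Next, the commutators are bounded from $D$ to $X$:
\begin{itemize}
\item $[A(s),B(t)]$ is bounded even on $X$, by the $L^\infty$ estimate stated before Proposition~\ref{prop:commutatorestCA}.
\item $[C(t),A(s)]$ and $[C(t),B(s)]$ are covered by Proposition~\ref{prop:commutatorestCA} and its analogue for $B$.
\end{itemize}
That proposition is stated at equal times, but its proof uses only the pointwise-in-time regularity of $A$, so the same bound holds for $s\neq t$ under \eqref{ass:ADV_comm}. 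The smoothness in time required in step (i) of the local error analysis, $A(s)-A(t)=\nO(s-t)$ and likewise for $B$ and $C$, is supplied by \eqref{ass:timereg_abstract} via Remark~\ref{rem:differentiability}, with $Y=X$ for $A,B$ and $Y=H^2$ for $C$.

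\emph{Compositions with evolution operators (main obstacle).} The delicate hypothesis is that the commutators remain bounded after composition with evolution operators. In $R^{(2)}$ the commutators act on $E_B E_A u_n$ or $E_A u_n$, and inside the variation-of-constants representation of $f=[E_G,H]$ the term $g$ contains $[G,H]E_G$ applied to $H^1$ data. We therefore need $H^1\to H^1$ bounds, uniform in $\tau$, for the subflows:
\begin{itemize}
\item $E_A$ and $E_B$: Proposition~\ref{prop:evolutionest_mult} with $m=1$, since $W^{1,\infty}\cap W^{2,d}$ embeds into the required $W^{1,3}\cap H^1$;
\item $E_C$: Proposition~\ref{prop:evolutionest_F} with $m=1$.
\end{itemize}
For the term $[E_C,A(t)+B(t)]$, the integrand contains $[C,A]E_C v$ with $v=E_BE_Au_n\in H^1$. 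This is controlled by Proposition~\ref{prop:commutatorestCA} combined with the $H^1$-stability of $E_C$. The remaining piece $[E_C,H'(t)]$ needs only $L^2$ bounds, using $A',B'\in L^\infty$. The subtle point is the placement of $E_C$ relative to the commutator: if a term arises with $C$ acting on something only in $H^1$, I would reorder it through the variation-of-constants representation so that $C$ appears only inside $[C,A]$, which loses at most one derivative.

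\emph{Conclusion.} Finally, the global stability of the products $\prod S_F$ follows by multiplying the exponential bounds, which give $\ee^{\text{Const}(T-t_0)}$. The $H^1$ bounds on $\prod E_F$ come from the exact solution regularity already established. Lady Windermere's fan \eqref{eq:GlobalError} with local error $\nO(\tau_n^2)$ then yields the first-order estimate in $L^2(\Omega)^d$, via Theorem~\ref{thm:glob_err}.
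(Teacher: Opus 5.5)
Your route is the paper's: its proof of this corollary is just the invocation of Theorem~\ref{thm:glob_err} with $X=L^2(\Omega;\mathbb{R}^d)$ and $D=H^1(\Omega;\mathbb{R}^d)$. Your verification of the $X$-stability bounds and of the commutator terms in $R^{(2)}$ is sound.

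One step, however, does not close as you wrote it. You take the temporal regularity of $C$ from \eqref{ass:timereg_abstract} with $Y=H^2$. That choice is unavoidable there, since $C(t)\notin\mathcal{L}(H^1,L^2)$. But then the term $\bigl(C(t_n+\tfrac23\tau_n+\tfrac13(t-t_n))-C(t)\bigr)E_CE_BE_A\,u(t_n)$ of $R^{(1)}$ needs a uniform bound on $\|E_CE_BE_A\,u(t_n)\|_{H^2}$. That means $H^2$ regularity of the exact solution and $H^2$-stability of all three subflows, none of which you establish with $D=H^1$. Parabolic smoothing does not rescue this: even $\|E_C(\zeta)\|_{H^2\leftarrow H^1}\lesssim\zeta^{-1/2}$ only gives a local error $\nO(\tau_n^{3/2})$ and hence global order $\tfrac12$. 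The same problem arises in the freezing error $E_F(\tau_n;t_n)-E_{F(t_n)}(\tau_n)$ of Approach~2, which contains $(C(t_n+\zeta)-C(t_n))\,u(t_n+\zeta)$.

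The remedy is to use the structure of $C$. Since the diffusion tensor is time-independent,
\[
C(s)-C(t)=-(\velo(s)-\velo(t))\cdot\nabla, \qquad C'(t)=-\partial_t\velo(t)\cdot\nabla .
\]
Under $\velo\in W^{1,\infty}(0,T;L^\infty(\Omega;\mathbb{R}^d))$ these are $\nO(|s-t|)$, respectively bounded, as operators from $H^1$ to $L^2$. With that, both $R^{(1)}$ and the Approach~2 freezing error close with $D=H^1$. This is exactly what the paper makes explicit only in the Bloch--Torrey setting, in Proposition~\ref{prop:timereg_BT}, where it stresses that $C_{\text{diff}}$ cancels.

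The same assumption also supplies $\velo\in L^\infty(0,T;L^\infty)$, which Proposition~\ref{prop:evolutionest_F} needs. You were right to flag this, but it cannot be extracted from \eqref{ass:v} (only $L^2$ in space) or from \eqref{ass:ADV_comm} (only $L^{2d/(d-2)}$), so it must be added as a hypothesis.

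Alternatively, you could keep $Y=H^2$ and work with $H^2$ data. That would use \eqref{ell-reg}, the $H^2$ part of Proposition~\ref{prop:evolutionest_F}, and Proposition~\ref{prop:evolutionest_mult} with $m=2$. The price is more regularity than $D=H^1$.
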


\section{Application to the Bloch-Torrey equation}\label{sec:BT}
We focus on Decomposition I, where 
\begin{equation}\label{ABC_BlochTorrey}
\begin{aligned}
&A(t)v:=\gamma\begin{pmatrix}\magind_z(t)v_\perp^\vdash-v_z \magind_\perp(t)^\vdash\\ \magind_\perp(t)^\vdash\cdot v_\perp\\ 0\end{pmatrix},\quad 
Bv:=\begin{pmatrix}-R_2v_\perp\\ -R_1(v_z-v_e)\\ 0\end{pmatrix},\quad
C(t)v:= \text{diag}(\mathcal{L}(t),\,\mathcal{L}(t),\,\mathcal{L}(t),\,0), \\ 
&\mathcal{L}(t)=C_{\text{diff}}+C_{\text{adv}}(t)
\end{aligned}
\end{equation}
and likewise for the frozen version with $A_n=A(t_n)$; note that $B$ is independent of time. 

While the evolution estimates can simply be taken from the previous section (and are trivial for the $v_e$ component), the commutators differ, in particular because of the skew-symmetry of  $A(t)$ and the vanishing last component of $C(t)$.

\subsubsection{Commutators for Bloch-Torrey}
For the commutator $[B,A(t)]=B(A(t)v)-A(t)(Bv)$, we obtain 
\begin{equation}
[B,A(t)]v
=\gamma\begin{pmatrix}\big((R_2-R_1)v_z+R_1v_e\big) \magind_\perp(t)^\vdash\\ (R_2-R_1)\magind_\perp(t)^\vdash\cdot v_\perp\\ 0\end{pmatrix}.
\end{equation}
Interestingly, the $\magind_z(t)$ 
term cancels out in the commutator, so that no regularity assumptions need to be imposed on $\magind_z(t)$ for estimating the commutator.

The commutator $[C(t),A(t)]=C(t)(A(t)v)-A(t)((C(t)v)$ follows the rules of the previous section
\begin{equation}
[C(t),A(t)]v
=\gamma\begin{pmatrix}
-[\mathcal{L}(t),\magind_\perp(t)^\vdash]v_z+[\mathcal{L}(t),\magind_z(t)]v_\perp^\vdash\\ 
-[\mathcal{L}(t),\magind_\perp(t)^\vdash\cdot]v_\perp\\
0
\end{pmatrix},
\end{equation}
and thus only leads to a one derivative loss in $v_\perp$, $v_z$, while leaving $v_e$ untouched.

However for the commutator $[C(t),B]=C(t)(Bv)-B((C(t)v)$, the appearance of $v_e$ in the longitudinal component of $B$ and the vanishing last component of both $B$ and $C(t)$ make a difference. We obtain
\begin{equation}
[C(t),B]v
=\begin{pmatrix}
-[\mathcal{L}(t),R_2]v_\perp\\ 
-[\mathcal{L}(t),R_1]v_z+\mathcal{L}(t)(R_1v_e)\\
0
\end{pmatrix};
\end{equation}
as a consequence, application of $[C(t),B]$ reduces spatial regularity by two orders in the last component $v_e$ and by one order in the first and second component $v_\perp$, $v_z$.

\begin{Proposition}\label{prop:commutatorest_BT}
\begin{itemize}
\item
For 
\begin{equation}\label{ass:comm_BA_BT}
\magind_\perp \in L^\infty(0,T;L^\infty(\Omega;\mathbb{R}^2))
\end{equation}
the commutator $[B,A(t)]$ satisfies
\[
\|[B,A(t)]\|_{L^2(\Omega;\mathbb{R}^4)\leftarrow L^2(\Omega;\mathbb{R}^4)} \leq K_{BA}
\]
\item
For 
\begin{equation}\label{ass:comm_CA_BT}
\begin{aligned}
&\magind_\perp \in L^\infty(0,T;W^{1,\infty}(\Omega;\mathbb{R}^2)\cap W^{2,3}(\Omega;\mathbb{R}^2)),\quad 
\magind_z \in L^\infty(0,T;W^{1,\infty}(\Omega;\mathbb{R})\cap W^{2,3}(\Omega;\mathbb{R})),\\
&D\in L^\infty(\Omega;\mathbb{R}^{d\times d})\cap W^{1,3}(\Omega;\mathbb{R}^{3\times 3}), \\
&\velo\in L^\infty(0,T;L^6(\Omega;\mathbb{R}^3)),
\end{aligned}
\end{equation}
the commutator $[C(t),A(t)]$ satisfies
\[
\|[C(t),A(t)]\|_{L^2(\Omega;\mathbb{R}^4)\leftarrow H^1(\Omega;\mathbb{R}^3)\times L^2(\Omega;\mathbb{R})} \leq K_{CA}
\]
\item
For 
\begin{equation}\label{ass:comm_CB_BT}
\begin{aligned}
&R_1,\, R_2\in W^{1,\infty}(\Omega;\mathbb{R})\cap W^{2,3}(\Omega;\mathbb{R})
\\
&D\in L^\infty(\Omega;\mathbb{R}^{d\times d})\cap W^{1,3}(\Omega;\mathbb{R}^{3\times 3}), \\
&\velo\in L^\infty(0,T;L^6(\Omega;\mathbb{R}^3)),
\end{aligned}
\end{equation}
the commutator $[C(t),B]$ satisfies
\[
\|[C(t),B]\|_{L^2(\Omega;\mathbb{R}^4)\leftarrow H^1(\Omega;\mathbb{R}^3)\times H^2(\Omega;\mathbb{R})} \leq K_{CB}
\]
\end{itemize}
Here, $K_{BA}$, $K_{CA}$, $K_{CB}$ only depend on $\Omega$, $T$, and the norms appearing in \eqref{ass:comm_BA_BT}, \eqref{ass:comm_CA_BT}, \eqref{ass:comm_CB_BT}.
\end{Proposition}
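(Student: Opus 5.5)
The plan is to work directly from the three explicit commutator formulas displayed just before the statement and estimate each entry separately. Each bound follows from Hölder's inequality, Sobolev embeddings in three dimensions ($H^1\hookrightarrow L^6$, $W^{1,3}\hookrightarrow L^q$ for all $q<\infty$, $W^{2,3}\hookrightarrow W^{1,q}$), and the scalar commutator estimate already used for Proposition~\ref{prop:commutatorestCA}.

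\emph{First item, $[B,A(t)]$.} This is a pure multiplication operator. Every entry is a product of $\gamma$, of $R_1$ or $R_2-R_1$ (which belong to $L^\infty$), and of a component of $\magind_\perp(t)^\vdash$, applied to $v_z$, $v_e$ or $v_\perp$. Hence
\[
\|[B,A(t)]v\|_{L^2}\le \gamma\,\bigl(2\|R_1\|_{L^\infty}+\|R_2\|_{L^\infty}\bigr)\,\|\magind_\perp\|_{L^\infty(0,T;L^\infty)}\,\|v\|_{L^2},
\]
and the $\magind_z$ contribution has already cancelled, so no assumption on $\magind_z$ is needed.

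\emph{Second item, $[C(t),A(t)]$.} Each entry has the form $[\mathcal{L}(t),m]w$, where $m$ is one of the scalar functions $\magind_z(t)$, $\magind_{\perp,x}(t)$, $\magind_{\perp,y}(t)$ and $w$ is one of $v_x,v_y,v_z$. We expand:
\[
[C_{\text{diff}},m]w=\nabla\cdot(D\nabla m)\,w+2\,D\nabla m\cdot\nabla w,
\qquad
[C_{\text{adv}}(t),m]w=-(\velo\cdot\nabla m)\,w .
\]
The terms are bounded as follows.
\begin{itemize}
\item The term $D\nabla m\cdot\nabla w$ is bounded by $\|D\|_{L^\infty}\|m\|_{W^{1,\infty}}\|w\|_{H^1}$.
\item The term $\nabla\cdot(D\nabla m)\,w$ splits into $(\nabla\cdot D)\nabla m\, w$ and $D:\nabla^2 m\, w$. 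The first piece is bounded by $\|D\|_{W^{1,3}}\|m\|_{W^{1,\infty}}\|w\|_{L^6}$. The second is bounded by $\|D\|_{L^\infty}\|m\|_{W^{2,3}}\|w\|_{L^6}$.
\item The advection term is bounded by $\|\velo\|_{L^6}\|\nabla m\|_{L^6}\|w\|_{L^6}$, using $L^6\cdot L^6\cdot L^6\subset L^2$. Alternatively one can use $\|\nabla m\|_{L^\infty}$ together with $L^6\cdot L^6\subset L^3\subset L^2$.
\end{itemize}
In all cases $\|w\|_{L^6}\lesssim\|w\|_{H^1}$. This is the calculation behind Proposition~\ref{prop:commutatorestCA} with $d=3$. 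The $v_e$ component does not enter, which explains the domain $H^1(\Omega;\mathbb{R}^3)\times L^2(\Omega)$.

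\emph{Third item, $[C(t),B]$.} The entries $[\mathcal{L}(t),R_2]v_\perp$ and $[\mathcal{L}(t),R_1]v_z$ are handled exactly as in the second item, with $m=R_i\in W^{1,\infty}\cap W^{2,3}$. The new term is $\mathcal{L}(t)(R_1v_e)$. Here I use $\|R_1v_e\|_{H^2}\lesssim \|R_1\|_{W^{1,\infty}\cap W^{2,3}}\|v_e\|_{H^2}$; the critical product $\nabla^2R_1\,v_e$ is controlled via $L^3\cdot L^\infty$, using $H^2\hookrightarrow L^\infty$ in three dimensions. I then bound $\mathcal{L}(t)$ as a map $H^2\to L^2$. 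For the diffusion part, $\nabla\cdot(D\nabla\phi)=(\nabla\cdot D)\nabla\phi+D:\nabla^2\phi$ is bounded by $(\|D\|_{W^{1,3}}+\|D\|_{L^\infty})\|\phi\|_{H^2}$, using $\nabla\phi\in H^1\subset L^6$. For the advection part, $\|\velo\cdot\nabla\phi\|_{L^2}\le\|\velo\|_{L^6}\|\nabla\phi\|_{L^3}$. This accounts for the two-derivative loss in $v_e$.

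The main obstacle is the careful bookkeeping of the integrability exponents in the products. In particular, the terms involving $\nabla D$ and $\velo$ lie only in $L^3$ and $L^6$ respectively, so each must be paired with the correct embedding. No single step is deep. Uniformity in $t$ follows because all time-dependent data are assumed in $L^\infty(0,T;\cdot)$, so the constants depend only on $\Omega$, $T$ and the listed norms.
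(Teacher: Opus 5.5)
Your proposal is correct and follows the paper's route. The paper has no separate appendix proof for this proposition; its argument is the explicit commutator formulas displayed before the statement, combined with the H\"older/Sobolev estimates from the proof of Proposition~\ref{prop:commutatorestCA} with $d=3$, and you carry out exactly this, including the $H^2\to L^2$ bound for $\mathcal{L}(t)$ and the product estimate for $R_1v_e$ that the paper leaves implicit for the new term $\mathcal{L}(t)(R_1v_e)$. Your bound for $[B,A(t)]$ also uses $R_1,R_2\in L^\infty(\Omega)$ and the constant $\gamma$; these are missing from the statement's list of dependencies, but this is harmless.
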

\textit{Proof.} See the Appendix.

\subsubsection{Temporal regularity for Bloch-Torrey}
The abstract hypotheses of Section~\ref{sec:GeneralFramework} also require boundedness of $A'$, $B'$, $C'$ and $O(\tau_n)$-differences of the frozen and exact operators. These translate into the following concrete conditions on $\magind$ and $\velo$.
\begin{Proposition}\label{prop:timereg_BT}
For
\begin{equation}\label{ass:timereg_BT}
\magind\in W^{1,\infty}(0,T;L^\infty(\Omega;\mathbb{R}^3)),\qquad \velo\in W^{1,\infty}(0,T;L^\infty(\Omega;\mathbb{R}^3)),
\end{equation}
there is a constant $C_A$, depending only on $\Omega$, such that
\begin{align*}
\|A(t)-A(s)\|_{L^2(\Omega;\mathbb{R}^4)\leftarrow L^2(\Omega;\mathbb{R}^4)} &\le C_A\gamma\,|t-s|\,\|\pat\magind\|_{L^\infty(0,T;L^\infty(\Omega))}\,,\\
\|A'(t)\|_{L^2(\Omega;\mathbb{R}^4)\leftarrow L^2(\Omega;\mathbb{R}^4)} &\le C_A\gamma\,\|\pat\magind(t)\|_{L^\infty(\Omega)}\,,\\
\|C(t)-C(s)\|_{L^2(\Omega;\mathbb{R}^4)\leftarrow H^1(\Omega;\mathbb{R}^3)\times L^2(\Omega)} &\le |t-s|\,\|\pat\velo\|_{L^\infty(0,T;L^\infty(\Omega))}\,,\\
\|C'(t)\|_{L^2(\Omega;\mathbb{R}^4)\leftarrow H^1(\Omega;\mathbb{R}^3)\times L^2(\Omega)} &\le \|\pat\velo(t)\|_{L^\infty(\Omega)}\,.
\end{align*}
Here the stationary diffusion part $C_{\text{diff}}$ cancels exactly in $C(t)-C(s)$ and in $C'(t)$; boundedness of the full operator $C(t)$ from $H^1(\Omega;\mathbb{R}^3)\times L^2(\Omega)$ to $L^2(\Omega;\mathbb{R}^4)$ is \emph{not} claimed here.
Since $R_1,R_2$ depend only on $x$, $B$ is autonomous: $B(t)-B(s)=0$ and $B'\equiv0$ exactly, with no regularity assumption needed.

Consequently, for $t\in[t_n,t_n+\tau_n]$, the differences arising in Approach~1 satisfy
\[
\Big\|A\big(t_n+\tfrac13(t-t_n)\big)-A(t)\Big\|_{L^2(\Omega;\mathbb{R}^4)\leftarrow L^2(\Omega;\mathbb{R}^4)} \le \tfrac23\,C_A\gamma\,\tau_n\,\|\pat\magind\|_{L^\infty_tL^\infty_x},
\]
\[
\Big\|C\big(t_n+\tfrac23\tau_n+\tfrac13(t-t_n)\big)-C(t)\Big\|_{L^2(\Omega;\mathbb{R}^4)\leftarrow H^1(\Omega;\mathbb{R}^3)\times L^2(\Omega)} \le \tfrac23\,\tau_n\,\|\pat\velo\|_{L^\infty_tL^\infty_x},
\]
while the corresponding difference for $B$ vanishes identically. For Approach~2, since $B'=0$, $F'(t)=A'(t)+C'(t)$ satisfies
\[
\|F'(t)\|_{L^2(\Omega;\mathbb{R}^4)\leftarrow H^1(\Omega;\mathbb{R}^3)\times L^2(\Omega)} \le C_A\gamma\,\|\pat\magind(t)\|_{L^\infty(\Omega)} + \|\pat\velo(t)\|_{L^\infty(\Omega)}.
\]
\end{Proposition}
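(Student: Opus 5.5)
The plan is to verify each estimate directly from the explicit structure of $A(t)$, $B$, $C(t)$ in \eqref{ABC_BlochTorrey}.

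\textbf{The operator $A$.} Since $A(t)$ is a pointwise multiplication operator, it depends linearly on $\magind(t)$, i.e. $A(t)v=\gamma\,\mathcal{M}(\magind(t))v$ with $\mathcal{M}$ a fixed linear map from $\mathbb{R}^3$ into $4\times4$ matrices. Hence $A(t)-A(s)=\gamma\,\mathcal{M}(\magind(t)-\magind(s))$. Its $L^2\leftarrow L^2$ norm is bounded by $C_A\gamma\,\|\magind(t)-\magind(s)\|_{L^\infty(\Omega)}$, where $C_A$ is the (pointwise) operator norm of $\mathcal{M}$; since the entries of $\mathcal{M}(w)$ are just components of $w$, a constant like $\sqrt{3}$ or $2$ suffices. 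The fundamental theorem of calculus in $W^{1,\infty}(0,T;L^\infty)$ gives
\[
\|\magind(t)-\magind(s)\|_{L^\infty}\le |t-s|\,\|\pat\magind\|_{L^\infty_tL^\infty_x}.
\]
The same linearity yields $A'(t)=\gamma\,\mathcal{M}(\pat\magind(t))$ and hence the pointwise-in-time bound on $A'(t)$.

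\textbf{The operator $C$.} The diffusion part $C_{\text{diff}}$ is time independent and cancels, so $C(t)-C(s)$ acts on $(v_\perp,v_z)$ as $-(\velo(t)-\velo(s))\cdot\nabla$ componentwise and vanishes on $v_e$. Hence
\[
\|(C(t)-C(s))v\|_{L^2}\le\|\velo(t)-\velo(s)\|_{L^\infty}\,\|\nabla(v_\perp,v_z)\|_{L^2}\le|t-s|\,\|\pat\velo\|_{L^\infty_tL^\infty_x}\,\|v\|_{H^1\times L^2},
\]
and similarly $C'(t)v=-(\pat\velo(t)\cdot\nabla)(v_\perp,v_z)$. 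One subtle point is to make sure the constant is $1$: I would take the $H^1$ norm to dominate $\|\nabla\cdot\|_{L^2}$ and use the pointwise Cauchy–Schwarz bound $|\velo\cdot\nabla w|\le|\velo|\,|\nabla w|$. The $B$ statements are trivial because $R_1$ and $R_2$ do not depend on $t$.

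\textbf{The consequences.} For Approach~1, substitute $s=t_n+\tfrac13(t-t_n)$ into the Lipschitz bound. Then $|t-s|=\tfrac23(t-t_n)\le\tfrac23\tau_n$. For the second difference,
\[
\Big|t_n+\tfrac23\tau_n+\tfrac13(t-t_n)-t\Big|=\tfrac23\,|\tau_n-(t-t_n)|\le\tfrac23\tau_n .
\]
For Approach~2, $F'=A'+C'$ since $B'=0$. Use the embedding $L^2\leftarrow L^2$ into $L^2\leftarrow H^1\times L^2$, which has norm at most $1$ because $\|v\|_{L^2}\le\|v\|_{H^1\times L^2}$, and then apply the triangle inequality.

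The only step needing care, which I expect to be the main (mild) obstacle, is justifying that $t\mapsto A(t)$ and $t\mapsto C(t)$ are differentiable in the stated operator norms. For this I would rely on the fact that $W^{1,\infty}$ functions in time with values in $L^\infty$ are Lipschitz with a.e. derivative equal to the weak derivative. The derivative statements are then read in this a.e. or weak sense, which is all that the variation-of-constants arguments in Section~\ref{sec:GeneralFramework} require.
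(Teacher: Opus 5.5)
Your proposal is correct and follows essentially the same route as the paper. Both arguments use the linear dependence of $A(t)$ on $\magind(t)$, the cancellation of $C_{\text{diff}}$ so that $C(t)-C(s)$ reduces to $-(\velo(t)-\velo(s))\cdot\nabla$, pointwise Cauchy--Schwarz/H\"older bounds, and the same elementary time-shift computations for Approaches~1 and~2. The one cosmetic difference is the order of the steps: you get the Lipschitz bounds by applying the fundamental theorem of calculus to $\magind$ and $\velo$ directly, whereas the paper first proves a.e.\ differentiability of $A$ and $C$ in operator norm via difference quotients and then integrates $A'$ and $C'$.
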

\textit{Proof.} See the Appendix.

Theorem~\ref{thm:glob_err} with $X=L^2(\Omega;\mathbb{R}^4)$, $D= H^1(\Omega;\mathbb{R}^3)\times L^2(\Omega;\mathbb{R})$ yields the following global error estimate.
\begin{Corollary}
For $A$, $B$, $C$ defined by \eqref{ABC_BlochTorrey}, under the assumptions \eqref{ass:comm_CA_BT}, \eqref{ass:comm_CB_BT} (which subsume \eqref{ass:comm_BA_BT}) and the temporal regularity \eqref{ass:timereg_BT}, the global error estimate
\begin{equation*}
\|u_n - u(t_n)\|_{L^2(\Omega)^d} \leq  C \max_{\nu \in \{0, \dots, n-1\}} \tau_{\nu}\,, \quad
n \in \{1, \dots, N\} 
\end{equation*}
holds with a constant $C>0$ independent of $N$ and $\tau_{\nu}$, $\nu \in \{0, \dots, N-1\}$. 
\end{Corollary}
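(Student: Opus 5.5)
The plan is to verify the hypotheses of Theorem~\ref{thm:glob_err} for $X=L^2(\Omega;\mathbb{R}^4)$ and $D=H^1(\Omega;\mathbb{R}^3)\times L^2(\Omega;\mathbb{R})$, and then read off the global estimate from the Lady Windermere's fan representation \eqref{eq:GlobalError}. The verification has three ingredients, taken from the three propositions of this section and the previous one.

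\emph{Stability.} The operators $E_A$ and $E_B$ are pointwise-in-space ODE flows. For $E_B$ the explicit formula gives a uniform $L^2$ bound, since $R_1,R_2\ge0$ and the $v_e$ component is constant. For $E_A$, skew-symmetry of $A(t)$ on the first three components makes the flow an isometry there, and the $v_e$ component is untouched. Alternatively one can invoke Proposition~\ref{prop:evolutionest_mult} with $m=0,1$, using the regularity of $\magind$ in \eqref{ass:comm_CA_BT}.

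For $E_C$ and $E_F$ we use Proposition~\ref{prop:evolutionest_F}. It applies componentwise to the first three components because $\velo\in L^\infty_tL^\infty_x$ by \eqref{ass:timereg_BT}, while the fourth component of $C$ vanishes. In the $F$ case, the multiplication part $A+B$ couples $v_e$ only through the bounded term $R_1v_e$, and the energy estimate absorbs it. This yields the bounds $\mathrm{e}^{\mathrm{Const}\,\tau}$ in $X$. The same proposition with $m=1$ gives boundedness in $D$, which in particular shows $\|u(t)\|_D\le\mathrm{Const}$ for initial data in $D$ (and in $H^1\times H^2$ where needed, see below).

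\emph{Temporal regularity and commutators.} Proposition~\ref{prop:timereg_BT} gives the $\mathcal{O}(\tau_n)$ bounds on $A(t_n+\frac13(t-t_n))-A(t)$ and on the corresponding difference for $C$, and the difference for $B$ vanishes. Thus $R^{(1)}=\mathcal{O}(\tau_n)$ as a map $D\to X$. The required composition bounds hold because $E_A$, $E_B$, $E_C$ preserve $D$.

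For $R^{(2)}$ we follow the commutator argument of Section~\ref{sec:GeneralFramework}. We write $[E_C,A(t)+B]$ and $[E_B,A(t)]$ via the variation-of-constants formula with inhomogeneity $g$, which involves $[C(\cdot),A(t)+B]$, $[B,A(t)]$ and $A'(t)$. We then bound these terms by Proposition~\ref{prop:commutatorest_BT} and Proposition~\ref{prop:timereg_BT}. This gives $\|R^{(2)}\|_{X\leftarrow D}=\mathcal{O}(\tau_n)$, hence local errors $\mathcal{O}(\tau_n^2)$. Summing over the fan with the stability bounds gives the first-order global estimate. For Approach~2 we additionally use the bound on $F'$ from Proposition~\ref{prop:timereg_BT} for the freezing error.

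\emph{Main obstacle.} The difficulty is the term $\mathcal{L}(t)(R_1v_e)$ in $[C(t),B]$. This term costs two derivatives in $v_e$, so the space $D$ as stated is not quite enough. The first remedy to try is the observation that $v_e\equiv M^{eq}$ is constant in time under all the flows, so its required regularity is just $M^{eq}\in H^2(\Omega)$ with suitable boundary behaviour. One then checks that $E_A$, $E_B$, $E_C$ and $E_F$ never modify the fourth component, so $\|R_1v_e\|_{H^2}$ stays bounded along all compositions, using \eqref{ass:comm_CB_BT} for $R_1\in W^{1,\infty}\cap W^{2,3}$. The price is that the constant depends on $\|M^{eq}\|_{H^2}$.

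A second gap is that $D$ must be preserved by $E_C$ and $E_F$ in $H^1$, which requires the $m=1$ case of Proposition~\ref{prop:evolutionest_F} with Dirichlet-compatible data. Once these two points are settled, the conclusion follows directly from Theorem~\ref{thm:glob_err}.
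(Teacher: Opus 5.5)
Your route is the paper's. Its whole proof is the sentence applying Theorem~\ref{thm:glob_err} with $X=L^2(\Omega;\mathbb{R}^4)$ and $D=H^1(\Omega;\mathbb{R}^3)\times L^2(\Omega)$. Stability, operator differences and commutators come from Section~\ref{sec:MultDiffAdv} and Propositions~\ref{prop:commutatorest_BT} and~\ref{prop:timereg_BT}, which is how you organise it.

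Your ``main obstacle'' is correct, and it is a defect of that one-line argument, not of yours. For the paper's $D$, the hypothesis $\|[B,C(t)]\|_{X\leftarrow D}\le\mathrm{Const}$ of Theorem~\ref{thm:glob_err} is false, because $\mathcal{L}(t)(R_1v_e)$ is second order in $v_e$. Proposition~\ref{prop:commutatorest_BT} itself only bounds $[C(t),B]$ on $H^1(\Omega;\mathbb{R}^3)\times H^2(\Omega)$. Taking $D=H^1(\Omega;\mathbb{R}^3)\times H^2(\Omega)$ is the right repair. Your observation that no flow touches the fourth component reduces it to an extra hypothesis on $M^{eq}$, which the corollary as stated does not contain.

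Two points need tightening.

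First, your earlier claim that $E_A,E_B,E_C$ preserve $D$ is false for the paper's $D$. $E_B$ sends $v_z$ to $\mathrm{e}^{-R_1t}v_z+(1-\mathrm{e}^{-R_1t})v_e$, so the $z$-component is only as regular as $v_e$. The composition bounds on $D$ demanded by Theorem~\ref{thm:glob_err} therefore fail there as well; they hold only after your correction.

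Second, ``suitable boundary behaviour'' must become an explicit hypothesis. The $z$-component of $[E_C(\zeta),B]v$ contains the difference between $R_1v_e$ evolved by the Dirichlet diffusion--advection flow over time $\zeta$ and $R_1v_e$ itself. This difference is $\mathcal{O}(\zeta)$ in $L^2$ when $R_1M^{eq}\in H^2(\Omega)\cap H^1_0(\Omega)$, but in general not when $R_1M^{eq}$ has a nonzero trace. In that case $C(t)Bv$ is not even defined for the Dirichlet realisation, and $E_B$ produces $z$-components outside $H^1_0$.

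So the hypotheses to add are:
\begin{itemize}
\item $R_1M^{eq}\in H^2(\Omega)\cap H^1_0(\Omega)$, e.g.\ $M^{eq}\in H^2(\Omega)\cap H^1_0(\Omega)$;
\item $H^1_0$-compatible first three components of $u(t_0)$.
\end{itemize}
With these, your argument proves the corollary.
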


\def\mag{\vec{M}}
\section{Numerical implementation and tests}\label{sec:Numerics}

\subsection{Spatial discretization}

To arrive at a full discretization of the Bloch-Torrey equation, we employ a conforming $P_1$ finite element approximation.
The resulting spatial discretization error, as well as the total error between the exact PDE solution and the numerical
approximation obtained by applying the Lie splitting scheme to the spatially discretized problem, are investigated
in \cite{ReconstructionWriteupBKPM}. The rates obtained there are expected to match the standard parabolic finite
element theory for $P_1$ elements (second order in $L^2$ by an Aubin--Nitsche duality argument, first order in $H^1$
directly from the interpolation estimate); the contribution of \cite{ReconstructionWriteupBKPM} is to make this
precise for the full coupled rotation--relaxation--diffusion--advection system.
Precisely, it is shown there that, provided $\Omega$ is a sufficiently regular domain so that \eqref{ell-reg} holds and for coefficients satisfying \eqref{ass:D},
\begin{equation*}
\velo\in L^\infty\bigl(0,T;W^{1,\infty}(\Omega)^3\bigr), \quad
R_1, \, R_2\in L^\infty\bigl(0,T;L^\infty(\Omega)\bigr),
\end{equation*}
and with an initial approximation on a mesh of size $h$ satisfying
\begin{equation*}
\|\mag(0)-\mag_h(0)\|_{L^2(\Omega)^3}
\leq
K h^2,
\end{equation*}
the finite element approximation $\mag_h$ (defined below) satisfies an error estimate
\begin{equation*}
\|\mag-\mag_h\|_{L^\infty(0,T;L^2(\Omega)^3)}
\leq
C_{\mathrm{sp}}h^2
\end{equation*}
and
\begin{equation*}
\|\mag-\mag_h\|_{L^2(0,T;H^1(\Omega)^3)}
\leq
C_{\mathrm{sp}}h
\end{equation*}
holds, provided the exact solution to the Bloch-Torrey equation satisfies
\begin{equation*}
\mag\in L^\infty\bigl(0,T;H^2(\Omega)^3\bigr),
\qquad
\partial_t \mag\in L^2\bigl(0,T;H^2(\Omega)^3\bigr).
\end{equation*}
Let $\{\mathcal{T}_h\}_{h>0}$ be a family of shape-regular triangulations of $\Omega$, with mesh parameter
\begin{equation*}
    h:=\max_{K\in\mathcal{T}_h}\operatorname{diam}(K).
\end{equation*}
We abbreviate $V=H_0^1(\Omega)$ according to our setting with homogeneous Dirichlet boundary conditions; alternatively, homogeneous Neumann or no-flux boundary conditions could be taken into account by setting $V=H^1(\Omega)$.

For each triangulation $\mathcal{T}_h$, let $V_h\subset V$ be the corresponding conforming first order Lagrange finite element space, and recall the standard interpolation and Ritz-projection estimates for such spaces \cite{BrennerScott2008,Ciarlet2002,ErnGuermond2004}.
We define the continuous and discrete magnetization spaces by
\begin{equation*}
    \mathbf{V}^{\mathrm{mag}}:=[V]^3,
    \qquad
    \mathbf{V}_h^{\mathrm{mag}}:=[V_h]^3
\end{equation*}
and denote the physical magnetization and its semidiscrete finite element approximation by $\mag\in\mathbf{V}^{\mathrm{mag}}$ and 
$\mag_h\in\mathbf{V}_h^{\mathrm{mag}}$, respectively.
The latter is defined as follows.

For $w,\varphi\in\mathbf{V}$, define the diffusion bilinear form by
\begin{equation*}
    a_D(w,\varphi)
    :=
    \sum_{j=1}^{3}
    \int_\Omega
    D(\vec{r})\nabla w_j
    \cdot
    \nabla\varphi_j
    \,d\vec{r}.
\end{equation*}
and the rotation, relaxation and advection bilinear forms by
\begin{equation}\label{b_rot_rel_adv}
\begin{aligned}
    &b_{\mathrm{rot}}(t;w,\varphi):=
    \gamma\int_\Omega
    \bigl(\magind(t,\vec{r})\times w(\vec{r})\bigr)
    \cdot\varphi(\vec{r})
    \,d\vec{r},\\
    &b_{\mathrm{rel}}(w,\varphi):=
    \int_\Omega
    \Bigl(R_2(\vec{r})w_x(\vec{r})\varphi_x(\vec{r})+R_2(\vec{r})w_y(\vec{r})\varphi_y(\vec{r})
    +R_1(\vec{r})w_z(\vec{r})\varphi_z(\vec{r})\Bigr)
    \,d\vec{r},\\
    &b_{\mathrm{adv}}(t;w,\varphi):=
    \int_\Omega
    \bigl(\vec{v}(t,\vec{r})\cdot\nabla w(\vec{r})\bigr)
    \cdot\varphi(\vec{r})
    \,d\vec{r},
\end{aligned}
\end{equation}
where $\bigl(\vec{v}\cdot\nabla w\bigr)\cdot\varphi:=
    \sum_{j=1}^3\bigl(\vec{v}\cdot\nabla w_j\bigr)\varphi_j$.
The relaxation term $b_{\mathrm{rel}}$ drives the transversal components $w_x,w_y$
towards zero and the longitudinal component $w_z$ towards the equilibrium
magnetization $M^{\mathrm{eq}}$; the latter enters as a source term
\begin{equation*}
    \ell(\varphi):=\int_\Omega R_1(\vec{r})\,M^{\mathrm{eq}}(\vec{r})\,\varphi_z(\vec{r})\,d\vec{r},
\end{equation*}
independent of $t$ since $R_1$ and $M^{\mathrm{eq}}$ are stationary.

The continuous weak formulation of the Bloch-Torrey equation is
\begin{equation*}
    \bigl(\partial_t \mag,\varphi\bigr)_{L^2(\Omega)^3}
    +
    a_D(\mag,\varphi)
    +
    b(t;\mag,\varphi)
    =
    \ell(\varphi),
    \qquad
    \forall\varphi\in\mathbf{V},
\end{equation*}
where
\begin{equation*}
    b(t;w,\varphi)
    =
    b_{\mathrm{adv}}(t;w,\varphi)
    +
    b_{\mathrm{rel}}(w,\varphi)
    +
    b_{\mathrm{rot}}(t;w,\varphi).
\end{equation*}
The semidiscrete finite element problem consists in finding
$\mag_h(t)\in\mathbf{V}_h^{\mathrm{mag}}$ such that
\begin{equation}\label{Galerkin}
    \bigl(\partial_t \mag_h,\varphi_h\bigr)_{L^2(\Omega)^3}
    +
    a_D(\mag_h,\varphi_h)
    +
    b(t;\mag_h,\varphi_h)
    =
    \ell_h(\varphi_h),
    \qquad
    \forall\varphi_h\in\mathbf{V}_h^{\mathrm{mag}},
\end{equation}
where $\ell_h(\varphi_h):=\int_\Omega R_1(\vec{r})\,M_{e,h}(\vec{r})\,\varphi_{h,z}(\vec{r})\,d\vec{r}$
uses the discrete approximation $M_{e,h}\in W_h$ of $M^{\mathrm{eq}}$ introduced next.

The auxiliary variable $M_e$ is approximated in a finite-dimensional space
$W_h\subset L^2(\Omega)$.
Note that $W_h=V_h$ is not necessarily required. Since $M^{\mathrm{eq}}$ (and its discretization $M_{e,h}\in W_h$) is independent of time, the source term $\ell_h$ only needs to be well defined variationally, and
\[
\bigl(R_1M_{e,h},\varphi_{z,h}\bigr)_{L^2(\Omega)}
\]
is meaningful for every $\varphi_{z,h}\in V_h$ as soon as $R_1\in L^\infty(\Omega)$, regardless of whether $W_h=V_h$: algebraically this term is represented by a (possibly rectangular) mixed mass matrix between $W_h$ and $V_h$, and no projection of $M_{e,h}$ onto $V_h$ is needed. See \cite{ReconstructionWriteupBKPM} for further comments on this.

\subsection{Numerical experiments}

On the cylindrical domain $\Omega=\{(x,y)\, : \, x^2+y^2\leq R\}\times (0,H)$, we use
manufactured fields of the form
\begin{equation}
    \mag^{\star}(t,\vec{r}) =
        e^{-\alpha t}
        \begin{pmatrix} a_1 \\ a_2 \\ a_3 \end{pmatrix}
        \psi(\vec{r}),
    \qquad
    \psi(x,y,z) = \bigl(R^2 - x^2 - y^2\bigr)\, x \sin\!\left(\tfrac{\pi z}{H}\right),
\end{equation}
common to all cases, to estimate the discretization error of the scheme: substituting
$\mag^\star$ into the discrete Bloch--Torrey equations produces a residual
$\mathbf{R}(t,\vec{r})$, which is added to the solver as a forcing term, so that the
rate at which the numerical solution $\mag_h$ converges to $\mag^{\star}$ as $h \to 0$
(spatial study) or $\Delta t \to 0$ (temporal study) verifies the implementation.
Here $\alpha = 1.0$, $a_1 = 1.0$, $a_2 = 0.5$, $a_3 = 0.3$; the spatial factor $\psi$
vanishes on the lateral boundary ($x^2+y^2=R^2$) and on the top and bottom caps
($z=0$, $z=H$), so $\mag^{\star}$ satisfies homogeneous Dirichlet boundary conditions
on all faces. The physical parameters used are $T_1 = 1.0\,\text{s}$, $T_2 = 0.1\,\text{s}$,
$M_{\text{eq}} = 1.0$, $D_0 = 9\times 10^{-3}\,\text{m}^2/\text{s}$,
and $V_{\max} = 0.05\,\text{m/s}$ for the cases involving flow.

For all three scenarios below, the diffusion and velocity are set to the most
demanding transport configuration considered in this work: heterogeneous diffusion
\[
D(r,z) = D_0\bigl(1+\alpha_D(1-r^2/R^2)\bigr)\bigl(1+\eta_D\, z/H\bigr), \qquad
\alpha_D=0.5,\ \eta_D=0.3,
\]
combined with steady Poiseuille flow $v_z(r) = V_{\max}(1-r^2/R^2)$ (the simpler
configurations of uniform diffusion and/or vanishing flow were also verified, with
the same convergence order). Three relaxation profiles $T_1(\vec r), T_2(\vec r),
M^{eq}(\vec r)$, taken from published tissue models, are tested against this
transport configuration:
\begin{itemize}
    \item \textbf{Case~B} (surface relaxation, Brownstein--Tarr model~\cite{brownstein1979importance}):
          $T_1=T_1^{\mathrm{ref}}$ constant, $M^{eq}=M_0^{eq}$ constant, and
          \[
          T_2(r) = T_2^{w} + (T_2^{a}-T_2^{w})\Bigl(1-\frac r R\Bigr)^{\alpha_{SR}},
          \]
          with $T_1^{\mathrm{ref}}=1.0$~s, $T_2^{a}=0.150$~s, $T_2^{w}=0.040$~s, $\alpha_{SR}=0.5$,
          $M_0^{eq}=1.0$;
    \item \textbf{Case~C} (blood-oxygenation/BOLD effect~\cite{ogawa1990brain,yablonskiy1994theory}),
          with $s_{O_2}(z)=s_0-\Delta s\, z/H$:
          \begin{align*}
          T_1(r) &= T_1^{w} + (T_1^{a}-T_1^{w})\Bigl(1-\frac r R\Bigr), &
          T_2(z) &= \frac{T_2^{\max}}{1+C_{\rm oxy}(1-s_{O_2}(z))^2}, \\
          M^{eq}(r) &= M_a - (M_a-M_w)\Bigl(\frac r R\Bigr)^2, &&
          \end{align*}
          with $T_1^{a}=1.200$~s, $T_1^{w}=0.900$~s,
          $T_2^{\max}=0.150$~s, $C_{\rm oxy}=4.0$, $s_0=0.98$, $\Delta s=0.68$, $M_a=1.0$, $M_w=0.5$;
    \item \textbf{Case~D} (two-compartment model~\cite{stanisz1997analytic}), with sigmoidal interface
          $f(r)=\tfrac12\bigl(1+\tanh\bigl(\tfrac{r-r_0}{\delta r}\bigr)\bigr)$, $r_0=0.5R$, $\delta r=0.15R$:
          \begin{align*}
          T_1(r,z) &= \bigl(T_1^{c}+\Delta T_1\, f(r)\bigr)\Bigl(1+\eta_{T_1}\bigl(\tfrac z H-\tfrac12\bigr)\Bigr), &
          T_2(r) &= T_2^{c}+\Delta T_2\, f(r), \\
          M^{eq}(r,z) &= \bigl(M^{c}+\Delta M\, f(r)\bigr)\Bigl(1-\eta_M\,\tfrac z H\Bigr), &&
          \end{align*}
          with $T_1^{c}=0.80$~s, $T_1^{w}=1.50$~s, $\eta_{T_1}=0.25$, $T_2^{c}=0.050$~s,
          $T_2^{w}=0.150$~s, $M^{c}=0.65$, $M^{w}=1.00$, $\eta_M=0.20$.
\end{itemize}
The finest mesh ($33{,}449$ vertices) is fixed and the time step is refined over ten
levels, from $\Delta t = 0.25\,\text{s}$ down to $\Delta t \approx 5\times 10^{-4}\,\text{s}$.
Table~\ref{tab:temporal_mms_letter} reports
$e_h = \|\mag_h(T_{\mathrm{end}})-\mag^\star(T_{\mathrm{end}})\|_{L^2(\Omega)}$ and the
empirical rate between consecutive levels, for the frozen splitting.

\begin{table}[htbp]
\centering
\caption{Temporal convergence of the $L^2(\Omega)$ error at $t = T_{\mathrm{end}}$
         for Cases~B, C, D (fine mesh, $n_v = 33{,}449$, frozen splitting).}
\label{tab:temporal_mms_letter}
\begin{tabular}{rr rr rr rr}
\hline
$\Delta t$ (s) & $N_t$
    & $e_h$ (B) & rate
    & $e_h$ (C) & rate
    & $e_h$ (D) & rate \\
\hline
2.50e-1 &    4 & 4.744e-1 & ---  & 4.414e-1 & ---  & 3.982e-1 & ---  \\
1.25e-1 &    8 & 2.143e-1 & 1.15 & 1.879e-1 & 1.23 & 1.699e-1 & 1.23 \\
6.25e-2 &   16 & 1.023e-1 & 1.07 & 8.658e-2 & 1.12 & 7.829e-2 & 1.12 \\
3.13e-2 &   32 & 5.006e-2 & 1.03 & 4.160e-2 & 1.06 & 3.761e-2 & 1.06 \\
1.56e-2 &   64 & 2.477e-2 & 1.02 & 2.040e-2 & 1.03 & 1.844e-2 & 1.03 \\
7.81e-3 &  128 & 1.232e-2 & 1.01 & 1.010e-2 & 1.01 & 9.126e-3 & 1.01 \\
3.91e-3 &  256 & 6.141e-3 & 1.00 & 5.023e-3 & 1.01 & 4.540e-3 & 1.01 \\
1.95e-3 &  512 & 3.067e-3 & 1.00 & 2.507e-3 & 1.00 & 2.268e-3 & 1.00 \\
9.77e-4 & 1024 & 1.539e-3 & 1.00 & 1.260e-3 & 0.99 & 1.144e-3 & 0.99 \\
5.00e-4 & 2000 & 8.042e-4 & 0.97 & 6.655e-4 & 0.95 & 6.121e-4 & 0.93 \\
\hline
\end{tabular}
\end{table}

\begin{figure}[htbp]
    \centering
    \includegraphics[width=0.55\linewidth]{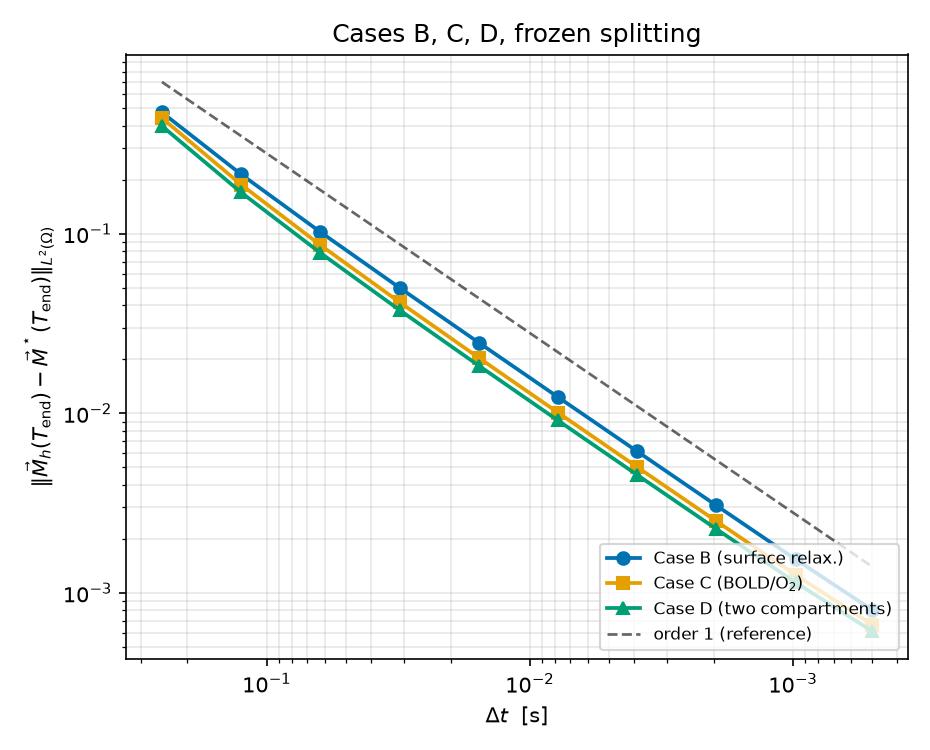}
    \caption{Temporal convergence study (MMS, frozen splitting), Cases~B, C, D:
    $L^2(\Omega)$ error at $t=T_{\mathrm{end}}$ versus time step $\Delta t$.}
    \label{fig:temporal_convergence}
\end{figure}

Table~\ref{tab:temporal_mms_letter_nf} and Fig.~\ref{fig:temporal_convergence_nf}
repeat this study for the non-frozen splitting (Approach~2 of
Section~\ref{sec:GeneralFramework}).

\begin{table}[htbp]
\centering
\caption{Temporal convergence of the $L^2(\Omega)$ error at $t = T_{\mathrm{end}}$
         for Cases~B, C, D (fine mesh, $n_v = 33{,}449$, non-frozen splitting).}
\label{tab:temporal_mms_letter_nf}
\begin{tabular}{rr rr rr rr}
\hline
$\Delta t$ (s) & $N_t$
    & $e_h$ (B) & rate
    & $e_h$ (C) & rate
    & $e_h$ (D) & rate \\
\hline
2.50e-1 &    4 & 4.237e-1 & ---  & 3.714e-1 & ---  & 3.336e-1 & ---  \\
1.25e-1 &    8 & 2.007e-1 & 1.08 & 1.689e-1 & 1.14 & 1.522e-1 & 1.13 \\
6.25e-2 &   16 & 9.760e-2 & 1.04 & 8.006e-2 & 1.08 & 7.213e-2 & 1.08 \\
3.13e-2 &   32 & 4.812e-2 & 1.02 & 3.895e-2 & 1.04 & 3.507e-2 & 1.04 \\
1.56e-2 &   64 & 2.390e-2 & 1.01 & 1.920e-2 & 1.02 & 1.729e-2 & 1.02 \\
7.81e-3 &  128 & 1.190e-2 & 1.01 & 9.534e-3 & 1.01 & 8.584e-3 & 1.01 \\
3.91e-3 &  256 & 5.941e-3 & 1.00 & 4.750e-3 & 1.01 & 4.277e-3 & 1.01 \\
1.95e-3 &  512 & 2.969e-3 & 1.00 & 2.373e-3 & 1.00 & 2.139e-3 & 1.00 \\
9.77e-4 & 1024 & 1.491e-3 & 0.99 & 1.195e-3 & 0.99 & 1.082e-3 & 0.98 \\
5.00e-4 & 2000 & 7.815e-4 & 0.97 & 6.345e-4 & 0.95 & 5.827e-4 & 0.93 \\
\hline
\end{tabular}
\end{table}

\begin{figure}[htbp]
    \centering
    \includegraphics[width=0.55\linewidth]{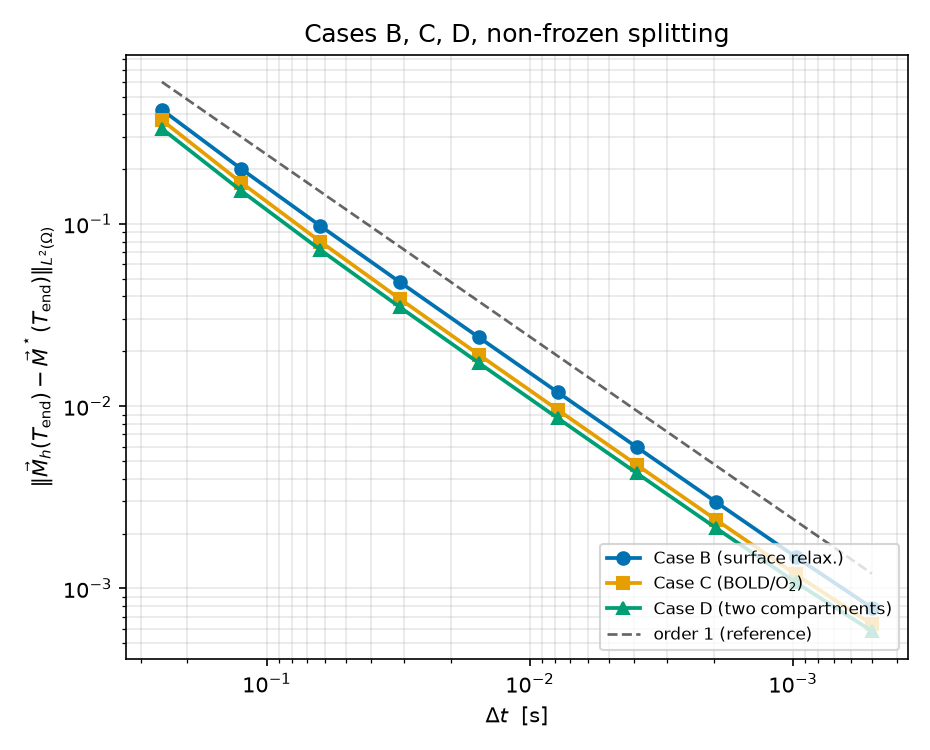}
    \caption{Temporal convergence study (MMS, non-frozen splitting), Cases~B, C, D:
    $L^2(\Omega)$ error at $t=T_{\mathrm{end}}$ versus time step $\Delta t$.}
    \label{fig:temporal_convergence_nf}
\end{figure}

\noindent
All cases exhibit $\mathcal{O}(\Delta t)$ convergence, consistent with the
first-order Lie splitting used for the temporal discretization, irrespective of the
frozen or non-frozen treatment of the non-autonomous sub-steps, confirming the
theoretical findings.

\clearpage

\section*{Appendix}

\subsection*{Proof of Proposition~\ref{prop:evolutionest_mult}}

To obtain spatial regularity of $u_A(t)$, we apply a vanishing viscosity approach, approximating $u_A$ by the solution $u_A^\varepsilon$ of a generalized heat equation 
\[
\tfrac{d}{d t} u_A^\varepsilon(t)+\varepsilon(-\Delta_D)^m u_A^\varepsilon(t)
=A(t)u_A^\varepsilon(t), \ 
t\in (0,T) \quad u_A^\varepsilon(0)=u_0
\]
for small $\varepsilon>0$; here $m\in\mathbb{N}$ quantifies the desired regularity and $-\Delta_D$ denotes the standard Dirichlet Laplacian.
By a simple extension of standard textbook arguments (cf., e.g., \cite[Chapter 7]{EvansBook}) for each $\varepsilon>0$ we have existence of a solution 
\[
u_A^\varepsilon\in H^1(0,T;L^2(\Omega;\mathbb{R}^d))\cap 
L^2(0,T;H^{2m}(\Omega;\mathbb{R}^d))\cap L^\infty(0,T;H^m(\Omega;\mathbb{R}^d))
\]
and the above generalized heat equation holds in an $L^2(0,T;L^2(\Omega;\mathbb{R}^d))$ sense, provided 
\[
A\in L^\infty(0,T;W^{1,3}(\Omega;\mathbb{R}^{d\times d})\cap H^m(\Omega;\mathbb{R}^{d\times d})\,.
\]
An $\varepsilon$ independent energy bound can be obtained by testing (that is, multiplying and integrating with respect to space and time) with 
$(-\Delta_D)^m u_A^\varepsilon$ (which is indeed an element of $L^2(0,T;L^2(\Omega;\mathbb{R}^d))$)
and integrating by parts, using the homogeneous boundary conditions 
\[
(-\Delta_D)^k u_A^\varepsilon=0\text{ on }\partial\Omega, \quad k\in\{0,\ldots,m-1\}
\]
as follows
\[
\begin{aligned}
&\|\nabla^m u_{A,i}^\varepsilon(t)\|_{L^2(\Omega)}^2
-\|\nabla^m u_{0,i}\|_{L^2(\Omega)}^2
+\varepsilon \int_0^t\|(-\Delta_D)^m u_{A,i}^\varepsilon(s)\|_{L^2(\Omega)}^2\,ds
-\sum_{j=1}^d\int_0^t\int_\Omega \Bigl(a_{ij}(s)\nabla^m u_{A,j}^\varepsilon(s)\Bigr) \nabla^m u_{A,i}^\varepsilon(s)\, dx\, ds\\
&= \sum_{j=1}^d\int_0^t\int_\Omega \Bigl(
\nabla^m\bigl(a_{ij}(s)\, u_{A,j}^\varepsilon(s)\bigr)
-a_{ij}(s)\nabla^m u_{A,j}^\varepsilon(s)\Bigr) \nabla^m u_{A,i}^\varepsilon(s)\, dx\, ds.
\end{aligned}
\]
To estimate the right hand side, we assume the domain $\Omega$ to be sufficiently smooth and employ a Leibniz rule estimate 
(\cite[Lemma 4.1]{HosonoKawashima2006}, see also \cite{DAncona2019} and the references therein),
\begin{equation}\label{eq:Leibniz}
\begin{aligned}
&\|\nabla^k(f\,g)-f\,\nabla^k g\|_{L^p(\Omega)} 
\leq C\,\Bigl(
\|\nabla f\|_{L^{q_1}(\Omega)} \|\nabla^{k-1} g\|_{L^{r_1}(\Omega)}
+\|g\|_{L^{q_2}(\Omega)} \|\nabla^k f\|_{L^{r_2}(\Omega)}
\end{aligned}
\end{equation}
with $C=C(k,p,q_1,r_1,q_2,r_2,\Omega)$ for $1\leq p,\,q_i,\,r_i\leq\infty$, $1/p=1/q_i+1/r_i$, $k\in\mathbb{N}$, $f\in W^{1,q}(\Omega)\cap W^{k,r}(\Omega)$, $g\in W^{k-1,r}(\Omega)\cap L^q(\Omega)$ and with the convention
\[
\nabla^k f=
\begin{cases}
\Delta^{k/2} f &\text{ if $k$ is even }\\
\nabla\Delta^{(k-1)/2} f &\text{ if $k$ is odd }
\end{cases}
\]
Setting $k=m$, $p=2$, 
$(q_1,r_1)=(d,\frac{2d}{d-2})$, 
$(r_2,q_2)=(2,\infty)$ in case $d<2m$,
$(r_2,q_2)=\frac{2d}{d-2m}$, $q_2=\frac{d}{m}$ in case $d>2m$
and 
\begin{equation}\label{enestA}
\begin{aligned}
&\|\nabla^m u_A^\varepsilon(t)\|_{L^2(\Omega;\mathbb{R}^d)}^2
-\|\nabla^m u_{0}\|_{L^2(\Omega;\mathbb{R}^d)}^2
+\varepsilon \int_0^t\|(-\Delta_D)^m u_A^\varepsilon(s)\|_{L^2(\Omega)}^2\,ds
\\
&\leq C \|A\|_{L^\infty(0,T;L^\infty(\Omega;\mathbb{R}^{d\times d}}
\int_0^t\|\nabla^m u_A^\varepsilon(s)\|_{L^2(\Omega;\mathbb{R}^d)}^2\, ds\\
&+ \int_0^t \|\nabla^m u_A^\varepsilon(s)\|_{L^2(\Omega;\mathbb{R}^d)}
\Bigl(\|\nabla A\|_{L^\infty(0,T;L^d(\Omega;\mathbb{R}^{d\times d}}
\|\nabla^{m-1} u_A^\varepsilon(s)\|_{L^{2d/(d-2)}(\Omega;\mathbb{R}^d)}
\\&\hspace*{5cm}
+\|\nabla^m A\|_{L^{q_2}(0,T;L^2(\Omega;\mathbb{R}^{d\times d}}
\|u_A^\varepsilon(s)\|_{L^{r_2}(\Omega;\mathbb{R}^d)}
\Bigr).
\end{aligned}
\end{equation}
Applying Cauchy-Schwarz and Young's inequality as well as 
continuity of the embeddings $H^1(\Omega)\to L^{2d/(d-2)}(\Omega)$, $H^m(\Omega)\to L^{r_2}(\Omega)$ 
and Gronwall's Lemma, we obtain
\[
\|u_A^\varepsilon(t)\|_{H^m(\Omega;\mathbb{R}^d)}
\leq C \|u_0\|_{H^m(\Omega;\mathbb{R}^d)}
\]
where $C$ depends on $m$, $d$, $\Omega$, $T$, and the norms of $A$ appearing in \eqref{enestA}, but not on $\varepsilon$.
We thus have existence of a sequence $\varepsilon_\ell\searrow0$ and of 
$\bar{u}\in L^\infty(0,T;H^m(\Omega;\mathbb{R}^d)$ such that
\[
u_A^{\varepsilon}\rightharpoonup \bar{u}\text{ in }L^\infty(0,T;H^m(\Omega;\mathbb{R}^d) \text{ as }\ell\to\infty
\]
To show that $\bar{u}$ satisfies the ODE, (note that the initial conditions are enforced on $u_A^{\varepsilon}$ and therfore on $\bar{u}$ in an $L^2(\Omega;\mathbb{R}^d)$ sense), we consider, for an arbitrary $\phi\in (C_c^\infty(0,T)\times\Omega)$ the identity
\[
\begin{aligned}
&\int_0^T\int_\Omega \bigl(\tfrac{d}{d t}\bar{u}(t,x)-A(t)\bar{u}(t,x)\bigr)\phi(t,x)\, dx\, dt\\
&= \int_0^T\int_\Omega \bigl(\tfrac{d}{d t}(\bar{u}(t,x)-u_A^\varepsilon(t,x))-A(t)(\bar{u}(t,x)-u_A^\varepsilon(t,x))
-\varepsilon (-\Delta_D)^m u_A^\varepsilon(t,x)
\bigr)\phi(t,x)\, dx\, dt\\
&= \int_0^T\int_\Omega \Bigl(-(\bar{u}(t,x)-u_A^\varepsilon(t,x))\tfrac{d}{d t}\phi(t,x)
-A(t)(\bar{u}(t,x)-u_A^\varepsilon(t,x))\Bigr.\\
&\qquad\Bigl.-\varepsilon u_A^\varepsilon(t,x)
(-\Delta_D)^m \phi(t,x)\Bigr)\, dx\, dt\\
&\to0\text{ as }\varepsilon\to0
\end{aligned}
\]

\subsection*{Proof of Proposition~\ref{prop:evolutionest_F}}

Testing \eqref{ivp_F} with $u_F$ and with $\partial_t u_F$, respectively, we obtain
\begin{equation*}
\begin{aligned}
&\sum_{i=1}^d \|u_{F,i}(t)\|_{L^2(\Omega)}^2
-\sum_{i=1}^d \|u_{0,i}\|_{L^2(\Omega)}^2
+\underline{c}_D\sum_{i=1}^d \int_0^t\|\nabla u_{F,i}(s)\|_{L^2(\Omega)}^2\,ds\\
&\leq 
\sum_{i=1}^d\int_0^t \|\velo(s)\|_{L^\infty(\Omega)}\|\nabla u_{F,i}(s)\|_{L^2(\Omega)} \|u_{F,i}(s)\|_{L^2(\Omega)}\, ds
+ \int_0^t \|A(s)+B(s)\| \sum_{j=1}^d \|u_{F,j}(s)\|_{L^2(\Omega)}^2\, ds 
\\
&\leq \frac{\underline{c}_D}{2} \sum_{i=1}^d \int_0^t\|\nabla u_{F,i}(s)\|_{L^2(\Omega)}^2\,ds
\\&\qquad
+\Bigl(\frac{2}{\underline{c}_D} \|\velo\|_{L^\infty(0,T;L^d(\Omega))}^2 
+\|A+B\|_{L^\infty(0,T;L^d(\Omega))}
\Bigr)\sum_{i=1}^d\int_0^t\|u_{F,i}(s)\|_{L^{2d/(d-2)}(\Omega)}^2\, ds,
\end{aligned}
\end{equation*}
where $H^1(\Omega)$ continuously embeds into $L^{2d/(d-2)}(\Omega)$ and
\[
\begin{aligned}
&\sum_{i=1}^d \int_0^t \|\partial_t u_{F,i}(s)\|_{L^2(\Omega)}^2\, ds
+\underline{c}_D\sum_{i=1}^d \|\nabla u_{F,i}(t)\|_{L^2(\Omega)}^2
-\|D\|_{L^\infty(\Omega)} \sum_{i=1}^d \|\nabla u_{0,i}\|_{L^2(\Omega)}^2
\\
&\leq 
\sum_{i=1}^d\int_0^t \|\velo(s)\|_{L^\infty(\Omega)}\|\nabla u_{F,i}(s)\|_{L^2(\Omega)} \|\partial_t u_{F,i}(s)\|_{L^2(\Omega)}\, ds\\
&\qquad+ \int_0^t \|A(s)+B(s)\|_{L^\infty(\Omega)} \sqrt{\sum_{j=1}^d \|u_{F,j}(s)\|_{L^2(\Omega)}^2} \sqrt{\sum_{j=1}^d \|\partial_t u_{F,j}(s)\|_{L^2(\Omega)}^2}\, ds 
\\
&\leq \frac12 \sum_{i=1}^d \int_0^t\|\partial_t u_{F,i}(s)\|_{L^2(\Omega)}^2\,ds
+\|\velo\|_{L^\infty(0,T;L^\infty(\Omega))}^2 \sum_{i=1}^d \int_0^t\|\nabla u_{F,i}(s)\|_{L^2(\Omega)}^2\,ds\\
&\qquad+\|A+B\|_{L^\infty(0,T;L^\infty(\Omega))}^2 \sum_{i=1}^d\int_0^t\|u_{F,i}(s)\|_{L^2(\Omega)}^2\, ds,
\end{aligned}
\]
Here we applied the Cauchy-Schwarz inequality in $L^2(0,T)$ and $\mathbb{R}^d$, as well as Young's inequality, and used the spectral norm as matrix norm in $\mathbb{R}^{d\times d}$.

The $H^2(\Omega)$ estimate is obtained by testing \eqref{ivp_F} with $\nabla\cdot(D\nabla \partial_t u_F)$, which yields
\[
\begin{aligned}
&\underline{c}_D\sum_{i=1}^d \int_0^t \|\nabla \partial_t u_{F,i}(s)\|_{L^2(\Omega)}^2\, ds
+\sum_{i=1}^d \|\nabla\cdot(D\nabla u_{F,i}(t))\|_{L^2(\Omega)}^2
-\sum_{i=1}^d \|\nabla\cdot(D\nabla u_{F,i}(0))\|_{L^2(\Omega)}^2
\\
&\leq 
\sum_{i=1}^d\int_0^t \|\velo(s)\|_{L^\infty(\Omega)}\|\nabla u_{F,i}(s)\|_{L^2(\Omega)} \|\nabla\cdot(D\nabla \partial_t u_{F,i}(s))\|_{L^2(\Omega)}\, ds\\
&\qquad+ \int_0^t \|A(s)+B(s)\|_{L^\infty(\Omega)} \sqrt{\sum_{j=1}^d \|u_{F,j}(s)\|_{L^2(\Omega)}^2} \sqrt{\sum_{j=1}^d \|\nabla\cdot(D\nabla \partial_t u_{F,j}(s))\|_{L^2(\Omega)}^2}\, ds, 
\end{aligned}
\]
which can be further estimated analogously to above.
Using elliptic regularity \eqref{ell-reg} and the $L^2(\Omega)$ estimate from above yields the assertion.

\subsection*{Proof of Proposition~\ref{prop:commutatorestCA}}

For any $w=(w_1,\ldots,w_d)\in H^2(\Omega)^d$ we obtain
\[
\begin{aligned}
    &([C_{\text{diff}} I,A(t)]w)_i
    =\nabla\cdot\left(D\nabla \left(\sum_{k=1}^d a_{ik}(t) w_k\right)\right)
    -\sum_{k=1}^d a_{ik}(t) \nabla\cdot\left(D\nabla w_k\right)\\
    &=\sum_{k=1}^d [C_{\text{diff}},a_{ik}(t)]w_k
    = \sum_{k=1}^d \Bigl(\nabla a_{ik}\cdot(D\nabla w_k)
    +\nabla w_k\cdot(D\nabla a_{ik}) 
    + w_k\nabla\cdot(D\nabla a_{ik})\Bigr)
\end{aligned}
\]
and likewise 
\[
    ([C_{\text{adv}}(t) I,A(t)]w)_i
    =\sum_{k=1}^d [C_{\text{adv}}(t),a_{ik}(t)]w_k
    = \sum_{k=1}^d w_k\, \velo\cdot \nabla a_{ik}
\]
Under the assumption \eqref{ass:ADV_comm},
we can estimate 
\[
\begin{aligned}
&\|\nabla a_{ik}(t)\cdot(D\nabla w_k)+\nabla w_k\cdot(D\nabla a_{ik}(t)) 
+ w_k(\nabla D \nabla a_{ik}(t) + D \nabla^2 a_{ik}(t))\|_{L^2(\Omega)}\\
&\leq 2\|\nabla a_{ik}(t)\|_{L^\infty(\Omega)} \|D\|_{L^\infty(\Omega)} \|\nabla w_k\|_{L^2(\Omega)}\\
&\qquad    + \|w_k\|_{L^{2d/(d-2)}(\Omega)}
    \Bigl(\|\nabla D\|_{L^d(\Omega)} \|\nabla a_{ik}(t)\|_{L^\infty(\Omega)}
    + \|D\|_{L^\infty(\Omega)} \|\nabla a_{ik}(t)\|_{W^{1,d}(\Omega)} \Bigr)
\end{aligned}
\]
and 
\[
\|w_k\, \velo(t)\cdot \nabla a_{ik}(t)\|_{L^2(\Omega)}
\leq \|w_k\|_{L^{2d/(d-2)}(\Omega)}
    \|\velo(t)\|_{L^d(\Omega)} \|\nabla a_{ik}(t)\|_{L^\infty(\Omega)}.
\]
Using continuity of the embedding $H^1(\Omega)\to L^{2d/(d-2)}(\Omega)$ for $\Omega\subseteq \mathbb{R}^d$, $d\geq 3$,
we can estimate
\[
\begin{aligned}
&\|[C_{\text{diff}} I,A(t)]\|_{L^2(\Omega)^d\leftarrow H^1(\Omega)^d}\\
&\leq K_{\text{diff}} \Bigl(
\|A\|_{L^\infty(0,T;W^{1,\infty}(\Omega;\mathbb{R}^{d\times d})\cap W^{2,d}(\Omega;\mathbb{R}^{d\times d}))}
+ \|D\|_{L^\infty(\Omega;\mathbb{R}^{d\times d})\cap W^{1,d}(\Omega;\mathbb{R}^{d\times d})}\Bigr)\\
&\|[C_{\text{adv}} I,A(t)]\|_{L^2(\Omega;\mathbb{R}^d)\leftarrow H^1(\Omega;\mathbb{R}^d)}\\
&\leq K_{\text{adv}} \Bigl(
\|A\|_{L^\infty(0,T;W^{1,\infty}(\Omega;\mathbb{R}^{d\times d})}
+ \|\velo\|_{L^\infty(0,T;L^d(\Omega;\mathbb{R}^d))}\Bigr)
\end{aligned}
\]
with constants $K_{\text{diff}}$, $K_{\text{adv}}$ depending only on $d$ and $\Omega$.

\subsection*{Proof of Proposition~\ref{prop:timereg_BT}}

Write a generic state as $v=(v_\perp,v_z,v_e)\in L^2(\Omega;\mathbb{R}^4)$, and recall
\[
A(t)v=\gamma\begin{pmatrix}\magind_z(t)v_\perp^\vdash-v_z\magind_\perp(t)^\vdash\\ \magind_\perp(t)^\vdash\cdot v_\perp\\ 0\end{pmatrix}\,,\qquad
Bv=\begin{pmatrix}-R_2v_\perp\\ -R_1(v_z-v_e)\\ 0\end{pmatrix}\,,\qquad
C(t)v=\begin{pmatrix}\mathcal{L}(t)v_\perp\\ \mathcal{L}(t)v_z\\ 0\end{pmatrix}\,,
\]
with $\mathcal{L}(t)=C_{\text{diff}}+C_{\text{adv}}(t)$, $C_{\text{diff}}\phi=\nabla\cdot(D\nabla\phi)$, $C_{\text{adv}}(t)\phi=-\velo(t)\cdot\nabla\phi$, and $D$ independent of time.

\medskip
\noindent\textit{Differentiability of $A(t)$.} For $h\neq0$ set
\[
q_h(t):=\frac{\magind(t+h)-\magind(t)}{h}-\pat\magind(t)\,.
\]
By $\magind\in W^{1,\infty}(0,T;L^\infty(\Omega;\mathbb{R}^3))$, $\|q_h(t)\|_{L^\infty(\Omega;\mathbb{R}^3)}\to0$ as $h\to0$ for a.e.\ $t\in(0,T)$. The candidate derivative
\[
A'(t)v=\gamma\begin{pmatrix}\pat b_z(t)v_\perp^\vdash-v_z\pat b_\perp(t)^\vdash\\ \pat b_\perp(t)^\vdash\cdot v_\perp\\ 0\end{pmatrix}
\]
satisfies
\[
\left(\frac{A(t+h)-A(t)}{h}-A'(t)\right)v
=\gamma\begin{pmatrix}q_{h,z}(t)v_\perp^\vdash-v_zq_{h,\perp}(t)^\vdash\\ q_{h,\perp}(t)^\vdash\cdot v_\perp\\ 0\end{pmatrix}\,,
\]
so that, by boundedness of multiplication by $L^\infty(\Omega)$ functions on $L^2(\Omega)$, there is $C_A=C_A(\Omega)$ with
\[
\left\|\frac{A(t+h)-A(t)}{h}-A'(t)\right\|_{L^2(\Omega;\mathbb{R}^4)\leftarrow L^2(\Omega;\mathbb{R}^4)}
\le C_A\gamma\|q_h(t)\|_{L^\infty(\Omega;\mathbb{R}^3)}\longrightarrow0\,.
\]
Hence $A'(t)$ exists for a.e.\ $t\in(0,T)$ in the operator norm $L^2(\Omega;\mathbb{R}^4)\leftarrow L^2(\Omega;\mathbb{R}^4)$, with
\[
\|A'(t)\|_{L^2(\Omega;\mathbb{R}^4)\leftarrow L^2(\Omega;\mathbb{R}^4)}\le C_A\gamma\|\pat\magind(t)\|_{L^\infty(\Omega;\mathbb{R}^3)}\,.
\]
Since $t\mapsto A(t)$ is absolutely continuous with values in this operator norm, $A(t)-A(s)=\int_s^tA'(\tau)\,\dd\tau$, whence
\[
\|A(t)-A(s)\|_{L^2(\Omega;\mathbb{R}^4)\leftarrow L^2(\Omega;\mathbb{R}^4)}
\le C_A\gamma|t-s|\,\|\pat\magind\|_{L^\infty(0,T;L^\infty(\Omega;\mathbb{R}^3))}\,.
\]

\medskip
\noindent\textit{Differentiability of $B$.} Since $R_1,R_2$ do not depend on $t$, $B$ is autonomous: $B(t)-B(s)=0$ and $B'(t)=0$, with no additional temporal regularity required on $R_1,R_2$.

\medskip
\noindent\textit{Differentiability of $C(t)$.} For $h\neq0$ set
\[
r_h(t):=\frac{\velo(t+h)-\velo(t)}{h}-\pat\velo(t)\,,
\]
so that $\|r_h(t)\|_{L^\infty(\Omega;\mathbb{R}^3)}\to0$ as $h\to0$ for a.e.\ $t$, by $\velo\in W^{1,\infty}(0,T;L^\infty(\Omega;\mathbb{R}^3))$. Since $D$, and hence $C_{\text{diff}}$, is independent of time, it cancels in the difference quotient, and the candidate derivative is
\[
C'(t)v=\begin{pmatrix}-\pat\velo(t)\cdot\nabla v_\perp\\ -\pat\velo(t)\cdot\nabla v_z\\ 0\end{pmatrix}\,,
\qquad
\left(\frac{C(t+h)-C(t)}{h}-C'(t)\right)v=\begin{pmatrix}-r_h(t)\cdot\nabla v_\perp\\ -r_h(t)\cdot\nabla v_z\\ 0\end{pmatrix}\,.
\]
For $v=(v_\perp,v_z,v_e)\in H^1(\Omega;\mathbb{R}^3)\times L^2(\Omega)$, by H\"older's inequality,
\[
\left\|\left(\frac{C(t+h)-C(t)}{h}-C'(t)\right)v\right\|_{L^2(\Omega;\mathbb{R}^4)}
\le\|r_h(t)\|_{L^\infty(\Omega;\mathbb{R}^3)}\|(v_\perp,v_z)\|_{H^1(\Omega;\mathbb{R}^3)}\,,
\]
so that
\[
\left\|\frac{C(t+h)-C(t)}{h}-C'(t)\right\|_{L^2(\Omega;\mathbb{R}^4)\leftarrow H^1(\Omega;\mathbb{R}^3)\times L^2(\Omega)}
\le\|r_h(t)\|_{L^\infty(\Omega;\mathbb{R}^3)}\longrightarrow0\,.
\]
Hence $C'(t)$ exists for a.e.\ $t\in(0,T)$ in this operator norm, with
\[
\|C'(t)\|_{L^2(\Omega;\mathbb{R}^4)\leftarrow H^1(\Omega;\mathbb{R}^3)\times L^2(\Omega)}\le\|\pat\velo(t)\|_{L^\infty(\Omega;\mathbb{R}^3)}\,,
\]
and, since the time-dependent part of $C$ is absolutely continuous in this norm, $C(t)-C(s)=\int_s^tC'(\tau)\,\dd\tau$ gives
\[
\|C(t)-C(s)\|_{L^2(\Omega;\mathbb{R}^4)\leftarrow H^1(\Omega;\mathbb{R}^3)\times L^2(\Omega)}
\le|t-s|\,\|\pat\velo\|_{L^\infty(0,T;L^\infty(\Omega;\mathbb{R}^3))}\,.
\]
Only $C(t)-C(s)$ and $C'(t)$ are estimated this way, since the stationary diffusion part $C_{\text{diff}}$ cancels in both; boundedness of $C(t)$ itself from $H^1(\Omega;\mathbb{R}^3)\times L^2(\Omega)$ to $L^2(\Omega;\mathbb{R}^4)$ is not asserted.

\medskip
\noindent\textit{Estimates for Approach~1.} For $t\in[t_n,t_n+\tau_n]$, $t_n+\tfrac13(t-t_n)-t=-\tfrac23(t-t_n)$, so $|t_n+\tfrac13(t-t_n)-t|\le\tfrac23\tau_n$, and hence
\[
\left\|A\Big(t_n+\tfrac13(t-t_n)\Big)-A(t)\right\|_{L^2(\Omega;\mathbb{R}^4)\leftarrow L^2(\Omega;\mathbb{R}^4)}
\le\tfrac23C_A\gamma\tau_n\|\pat\magind\|_{L^\infty_tL^\infty_x}\,.
\]
Likewise, $t_n+\tfrac23\tau_n+\tfrac13(t-t_n)-t=\tfrac23(t_n+\tau_n-t)$, of absolute value at most $\tfrac23\tau_n$, so that
\[
\left\|C\Big(t_n+\tfrac23\tau_n+\tfrac13(t-t_n)\Big)-C(t)\right\|_{L^2(\Omega;\mathbb{R}^4)\leftarrow H^1(\Omega;\mathbb{R}^3)\times L^2(\Omega)}
\le\tfrac23\tau_n\|\pat\velo\|_{L^\infty_tL^\infty_x}\,,
\]
while the corresponding difference for $B$ vanishes identically.

\medskip
\noindent\textit{Estimate for Approach~2.} Since $F(t)=A(t)+B+C(t)$ and $B'=0$, $F'(t)=A'(t)+C'(t)$; as $H^1(\Omega;\mathbb{R}^3)\times L^2(\Omega)\subset L^2(\Omega;\mathbb{R}^4)$, the bound on $A'(t)$ applies also on this smaller space, and
\[
\|F'(t)\|_{L^2(\Omega;\mathbb{R}^4)\leftarrow H^1(\Omega;\mathbb{R}^3)\times L^2(\Omega)}
\le C_A\gamma\|\pat\magind(t)\|_{L^\infty(\Omega;\mathbb{R}^3)}+\|\pat\velo(t)\|_{L^\infty(\Omega;\mathbb{R}^3)}\,.
\]
This establishes all estimates claimed in Proposition~\ref{prop:timereg_BT}.

\bibliographystyle{siam}
\bibliography{lit}
\end{document}